\theoremstyle{plain}
\newtheorem{thm}[equation]{Theorem}
\newtheorem{lemma}[equation]{Lemma}
\newtheorem{prop}[equation]{Proposition}
\theoremstyle{remark}
\newtheorem{remark}[equation]{Remark}
\numberwithin{equation}{section}
\theoremstyle{definition}
\newtheorem{definition}[equation]{Definition}
\newcommand{\R}{\mathbb{R}}
\newcommand{\Rn}{\mathbb{R}^n}
\newcommand{\dif}[0]{\ensuremath{\,\mathrm{d}}}
\DeclareMathOperator*{\esssup}{ess\,sup}
\def\vint_#1{\mathchoice%
          {\mathop{\kern 0.2em\vrule width 0.6em height 0.69678ex depth -0.58065ex
                  \kern -0.8em \intop}\nolimits_{\kern -0.4em#1}}%
          {\mathop{\kern 0.1em\vrule width 0.5em height 0.69678ex depth -0.60387ex
                  \kern -0.6em \intop}\nolimits_{#1}}%
          {\mathop{\kern 0.1em\vrule width 0.5em height 0.69678ex
              depth -0.60387ex
                  \kern -0.6em \intop}\nolimits_{#1}}%
          {\mathop{\kern 0.1em\vrule width 0.5em height 0.69678ex depth -0.60387ex
                  \kern -0.6em \intop}\nolimits_{#1}}}
\begin{document}
\title[Unbounded supersolutions: a dichotomy]{Unbounded supersolutions of some quasilinear parabolic equations: a dichotomy}
\author{Juha Kinnunen and Peter Lindqvist}
\address[Juha Kinnunen]{Department of Mathematics, Aalto University, P.O. Box 11100, 
FI-00076 Aalto University, Finland}
\email{juha.k.kinnunen@aalto.fi}

\address[Peter Lindqvist]{Department of Mathematics, Norwegian University of Science and Technology,
N-7491 Trondheim, Norway}
\email{lqvist@math.ntnu.no}

\subjclass[2010]{Primary 35K55, Secondary 35K65, 35K20, 31C45}
\keywords{Evolutionary $p$-Laplace Equation, viscosity solutions, supercaloric functions}

\begin{abstract}
We study unbounded ''supersolutions'' of the Evolutionary $p$-Laplace equation with slow diffusion. They are the same functions as the viscosity supersolutions. A fascinating dichotomy prevails: either they are locally summable to the power $p-1+\tfrac{n}{p}-0$ or not summable to the power $p-2$. There is a void gap between these exponents. Those summable to the power $p-2$ induce a Radon measure, while those of the other kind do not.
We also sketch similar results for the Porous Medium Equation.
\end{abstract}

\maketitle


\section{Introduction}
\label{intro}
The \emph{unbounded} supersolutions of the Evolutionary $p$-Laplace Equation
$$\dfrac{\partial u}{\partial t}-\nabla\cdot\bigl(|\nabla u|^{p-2}\nabla u \bigr)=0, \qquad 2 < p <\infty,$$
exhibit a fascinating dichotomy in the slow diffusion case $p> 2$. This phenomenon was discovered and investigated in \cite{KuL}. The purpose of the present work is to give an alternative proof, directly based on the iterative procedure in \cite{KL}. Besides the achieved simplification, our proof can readily be extended to more general quasilinear equations of the form
$$\dfrac{\partial u}{\partial t}-\nabla\cdot\mathbf{A}(x,t,u,\nabla u)=0,$$
which are treated in the book [DGV]. The expedient analytic tool is the intrinsic Harnack inequality for positive solutions, see \cite{DG2}. We can avoid to evoke it for \emph{super}solutions. We also mention the books \cite{Db} and \cite{WZY} as general references. 

The supersolutions that we consider are called $p$-supercaloric functions\footnote{They are also called $p$-parabolic functions, as in \cite{KL}.}. They are pointwise defined lower semicontinuous functions, finite in a dense subset, and are required to satisfy the Comparison Principle with respect to the solutions of the equation; see Definition \ref{supercaloric} below. The definition is the same as the one in classical potential theory for the Heat Equation\footnote{Yet, the dichotomy we focus our attention on, is impossible for the Heat Equation.}, to which the equation reduces when $p=2$, see \cite{W}. Incidentally, \emph{the $p$-supercaloric functions are exactly the viscosity supersolutions of the equation}, see \cite{JLM}. 

There are two disjoint classes of $p$-supercaloric functions, called class $\mathfrak{B}$ and $\mathfrak{M}$. We begin with the former one. 
Throughout the paper we assume that $\Omega$ is an open subset of $\mathbb R^n$ and we denote $\Omega_T=\Omega\times(0,T)$ for $T>0$.

\begin{thm} [Class $\mathfrak{B}$] Let $p>2$. 
For a $p$-supercaloric function $v:\Omega_T \to (-\infty,\infty]$ the following conditions are equivalent:
\begin{itemize}
\item[\rm{(i)}] $v\in L^{p-2}_{loc}(\Omega_T)$,
\item[\rm{(ii)}] the Sobolev gradient $\nabla v$ exists and $\nabla v \in  L^{q'}_{loc}(\Omega_T)$ whenever $q' < p-1+\frac{1}{n+1}$,
\item[\rm{(iii)}] $v \in  L^{q}_{loc}(\Omega_T)$ whenever $q < p-1+\frac{p}{n}$.
\end{itemize}
\end{thm}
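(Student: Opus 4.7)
The argument is by the cyclic chain $(\text{iii})\Rightarrow(\text{i})\Rightarrow(\text{ii})\Rightarrow(\text{iii})$. The direction $(\text{iii})\Rightarrow(\text{i})$ is H\"older's inequality on any compact subcylinder $Q\Subset\Omega_T$, using only that $p-2<p-1+\tfrac{p}{n}$ and that every $p$-supercaloric function is bounded from below on $Q$ by the minimum principle.

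The substantive content is $(\text{i})\Rightarrow(\text{ii})$, with $(\text{ii})\Rightarrow(\text{iii})$ obtained as a by-product. Both are handled via the iterative Caccioppoli bootstrap of \cite{KL}. The plan is to replace $v$ by the bounded truncations $v_k=\min(v,k)$, which are weak supersolutions, thanks to the equivalence between $p$-supercaloric functions and viscosity supersolutions established in \cite{JLM}. For bounded weak supersolutions one has, in a weighted form with a suitable power of $v_k$ inserted into the test function, a Caccioppoli-type energy inequality controlling both $\iint|\nabla v_k|^{p}v_k^{\beta}\phi^p\dif x\dif t$ and $\sup_{t}\int v_k^{\beta+2}\phi^p\dif x$ by lower-order quantities involving $v_k^{\beta+p}|\nabla\phi|^p$ and $v_k^{\beta+2}|\phi_t|\phi^{p-1}$, with the time derivative handled via Steklov averaging. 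Coupling this with the parabolic Gagliardo--Nirenberg--Sobolev inequality yields an integrability-improvement map $\Phi\colon r\mapsto\Phi(r)$: given $v_k\in L^{r}_{\mathrm{loc}}$, one obtains both a gradient bound $\nabla v_k\in L^{q'(r)}_{\mathrm{loc}}$ and an upgraded bound $v_k\in L^{\Phi(r)}_{\mathrm{loc}}$, with all constants independent of $k$.

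One then iterates $\Phi$ starting from the base exponent $r_0=p-2$ supplied by (i). A direct analysis of the recursion shows that $r_j\nearrow p-1+\tfrac{p}{n}$ and the corresponding gradient exponents $q'(r_j)\nearrow p-1+\tfrac{1}{n+1}$, reaching the asserted thresholds in the limit. Uniformity in $k$, combined with the lower semicontinuity of $v$ and standard weak compactness, allows us to pass to the limit $k\to\infty$; the uniform gradient estimates identify $\nabla v$ in the Sobolev sense with the claimed integrability, and the function-space bounds deliver (iii) simultaneously.

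\emph{Main obstacle.} The principal difficulties are: (a) justifying the weighted Caccioppoli estimate for the truncations via Steklov regularization and making a judicious choice of the weighted test function that is admissible for all relevant values of $\beta$; (b) verifying that the fixed point of the iteration $\Phi$ is exactly the critical exponent $p-1+\tfrac{p}{n}$, which corresponds to the Barenblatt scaling of the $p$-Laplace equation and is the reason the dichotomy occurs at this threshold; and (c) preserving the uniform-in-$k$ control through the limit passage so that the Sobolev gradient of $v$ is correctly identified and no integrability is lost.
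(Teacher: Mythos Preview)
Your plan diverges from the paper's argument in a structural way, and the divergence hides a real difficulty. The paper does \emph{not} prove the equivalence by a direct local Moser-type bootstrap on the truncations $v_k=\min(v,k)$. Instead it manufactures, on every inner cube $Q_l\Subset\Omega$, an auxiliary $p$-supercaloric function $w$ which equals $v$ on $Q_l\times(0,T)$ and is extended to a larger cube $Q_{2l}$ by solving a lateral Dirichlet problem with zero data on $\partial Q_{2l}$. The whole point of this construction is that the sharp summability iteration (the paper's Theorem~\ref{KL}, taken from \cite{KL1}) requires $w_j\in L^p(0,T;W^{1,p}_0(Q_{2l}))$; the zero lateral values are what launch the iteration, not a hypothesis like $v\in L^{p-2}_{loc}$. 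The dichotomy then enters through Harnack's inequality: either the auxiliary Dirichlet solutions stay locally bounded as the approximation parameter $\varepsilon\to 0$ (and Theorem~\ref{KL} applies), or they blow up, in which case the separable minorant $(t-t_0)^{-1/(p-2)}\mathfrak{U}(x)$ forces $v\notin L^{p-2}_{loc}$. Condition (i) is therefore a \emph{conclusion} of the bounded case rather than the seed of an iteration.

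The gap in your scheme is at step (i)$\Rightarrow$(ii). A weighted Caccioppoli inequality for supersolutions, localised by a spatial cut-off $\zeta$ and tested with $(v_k+1)^{\beta}\zeta^p\eta$, produces a term of the form $\esssup_t\int (v_k+1)^{\beta+1}\zeta^p\dif x$ on the favourable side only when $\beta+1>0$; but to control the right-hand side $\iint (v_k+1)^{p+\beta-1}|\nabla\zeta|^p$ using only $v\in L^{p-2}_{loc}$ you are forced into $\beta\le -1$, where the sup-in-time term degenerates to a triviality and cannot feed the parabolic Gagliardo--Nirenberg step. In short, space--time $L^{p-2}$ membership alone does not supply the $L^\infty_t L^r_x$ control on which the parabolic bootstrap hinges, and for \emph{super}solutions the sign of the inequality gives lower, not upper, bounds on time slices when you try to extract it directly. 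This is exactly why the paper takes the detour through zero lateral boundary values; your plan would need an additional idea to manufacture the missing sup-in-time estimate before the iteration can begin.
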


In this case there exists a non-negative Radon measure $\mu$ such that
\begin{equation}
\label{radon}
\int_{0}^{T}\int_{\Omega}\left(-v\frac{\partial \varphi}{\partial t}+ \langle|\nabla v|^{p-2}\nabla v, \nabla \varphi \rangle \right) \dif x \dif t 
= \int_{\Omega_T}\varphi\,\dif \mu
\end{equation}
for all test functions $\varphi \in C_{0}^{\infty}(\Omega_T)$. In other words, the equation
$$\dfrac{\partial v}{\partial t}-\nabla \cdot\bigl(|\nabla v|^{p-2}\nabla v \bigl) = \mu$$
 holds in the sense of distributions, cf. \cite{KLP}. It is of utmost importance that the local summability exponent for the gradient in (ii) is at least $p-2$. 
 Such  measure data equations have been much studied and we only refer to \cite{BDG}. For potential estimates we refer to \cite{KuMi1}, \cite{KuMi2}.

As an example of a function belonging to class $\mathfrak{B}$ we mention the celebrated Barenblatt solution
\begin{equation}
\label{Baren}
\mathfrak{B}(x,t) = 
\begin{cases} 
t^{-\tfrac{n}{\lambda}}\left[C-\frac{p-2}{p}\lambda^{\tfrac{1}{1-p}}\left(\frac{|x|}{t^{1/\lambda}}\right)^{\frac{p}{p-1}}\right]_{+}^{\tfrac{p-1}{p-2}},\quad \text{when} \quad t > 0,\\
\quad 0,\quad \text{when} \quad t \leq 0,
\end{cases}
\end{equation}
found in 1951, cf. \cite{B}. Here $\lambda = n(p-2)+p$ and $p>2$. It is a solution of the Evolutionary $p$-Laplace Equation, except at the origin $x=0$, $t=0$. 
Moreover, it is a $p$-supercaloric function in the whole $\Rn\times \R,$ where it satisfies the equation
$$\frac{\partial \mathfrak{B}}{\partial t} - \nabla\cdot(|\nabla \mathfrak{B}|^{p-2}\nabla  \mathfrak{B}) = c \delta$$
in the sense of distributions ($\delta = $ Dirac's delta). It also shows that the exponents in (i) and (ii) of the previous theorem are sharp. 

A very different example is the stationary function
$$v(x,t) = \sum_{j}\frac{c_j}{|x-q_j|^{\frac{n-p}{p-1}}},\qquad 2 < p < n,$$
where the $q_j$'s are an enumeration of the rationals and the $c_j \geq 0$ are convergence factors. Indeed, this is a $p$-supercaloric function, it has a Sobolev gradient, and $v(q_j,t) \equiv \infty$ along every rational line $x = q_j$, $-\infty < t < \infty$, see \cite{LM2}. 

Then we describe class $\mathfrak{M}$.

\begin{thm}[Class $\mathfrak{M}$]  Let $p>2$. For a $p$-supercaloric function $v:\Omega_T \to(-\infty,\infty]$ the following conditions are equivalent:
\begin{itemize}
\item[\rm{(i)}] $v\not \in L^{p-2}_{loc}(\Omega_T)$,
\item[\rm{(ii)}] there is a time $t_0$, $0<t_0<T$, such that
$$\liminf_{\substack{(y,t)\to(x,t_0)\\t>t_0}}v(y,t)(t-t_0)^{\frac{1}{p-2}}>0
\quad\text{for all}\quad x\in \Omega.$$
\end{itemize}
\end{thm}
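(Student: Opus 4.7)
The direction (ii)$\Rightarrow$(i) is immediate from integration: under (ii), near each $x\in\Omega$ one has $v(y,t)\geq c_x(t-t_0)^{-1/(p-2)}$ for $t\in(t_0,t_0+\delta_x)$ and $y$ in a neighbourhood of $x$, so that $v^{p-2}\geq c_x^{p-2}(t-t_0)^{-1}$ fails to be integrable in $t$ near $t_0^+$, forcing $v\notin L^{p-2}_{loc}(\Omega_T)$. The content of the theorem is the converse.

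For (i)$\Rightarrow$(ii) the plan proceeds in three stages. First, I would localize the singularity in time: pick a compact cylinder $K\times[\alpha,\beta]\subset\Omega_T$ with $0<\alpha<\beta<T$ and $\int_\alpha^\beta\!\int_K v^{p-2}\dif x\dif t=\infty$, and set
$$t_0 \;=\; \inf\Bigl\{\tau\in(\alpha,\beta] : \int_\alpha^\tau\!\int_K v^{p-2}\dif x\dif t = \infty\Bigr\},$$
so that $0<\alpha\leq t_0<\beta<T$ and $\int_{t_0}^{t_0+\varepsilon}\!\int_K v^{p-2}\dif x\dif t=\infty$ for every $\varepsilon>0$. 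Second, I would upgrade this integral divergence into a pointwise rate by running the iterative energy/De Giorgi scheme from \cite{KL} on the bounded truncations $v^k=\min(v,k)$, which are genuine weak supersolutions; exploiting the intrinsic time scale $t-t_0\sim M^{2-p}$ compatible with a sup-bound $v\sim M$, the iteration converts the integral blow-up into
$$\esssup_{B_\rho(x_1)}v(\cdot,t) \;\geq\; c(t-t_0)^{-1/(p-2)}\quad\text{as }t\to t_0^+$$
for some small ball $B_\rho(x_1)\subset K$. Third, I would spread this to every $x\in\Omega$ by comparison with translated separable friendly giants $W_*(x,t)=(t-s)^{-1/(p-2)}\Psi_R(x-x_*)$, where $\Psi_R\geq 0$ solves $-\nabla\cdot(|\nabla\Psi_R|^{p-2}\nabla\Psi_R)=\frac{1}{p-2}\Psi_R$ in $B_R$ with zero boundary data; since $W_*$ is $p$-supercaloric with $W_*(\cdot,s^+)\equiv+\infty$ on $B_R(x_*)$, covering $\Omega$ by a finite chain of such balls and invoking the comparison principle built into the definition of a $p$-supercaloric function propagates the pointwise lower bound to every $x\in\Omega$.

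The principal obstacle is the middle stage: extracting a \emph{pointwise} friendly-giant rate from the mere divergence of $\int v^{p-2}$. This is exactly where the iterative procedure of \cite{KL}, tuned to the intrinsic time scale of the $p$-Laplace equation, is indispensable; the absence of an intrinsic Harnack inequality for supersolutions forces the detour through the truncations $v^k$. The exponent $-1/(p-2)$ is then forced as the unique scaling consistent with both $\int_{t_0}^{t_0+\varepsilon}v^{p-2}\dif t=\infty$ and the equation, while the final spreading step uses only comparison with the explicit minorant $W_*$.
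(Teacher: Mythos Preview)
Your easy direction (ii)$\Rightarrow$(i) is fine. The converse, however, has a genuine gap at what you yourself call ``the middle stage''. The iterative procedure from \cite{KL} (restated here as Theorem~\ref{KL}) does \emph{not} manufacture pointwise lower bounds from integral divergence; it produces \emph{upper} summability estimates, and it does so only under the hypothesis that the truncations $v_j=\min\{v,j\}$ lie in $L^p(0,T;W^{1,p}_0(\Omega))$, i.e.\ have zero lateral boundary values. Nothing in your set-up arranges this. Working with the truncations alone does not help: the $v_j$ are merely weak supersolutions, so the intrinsic Harnack inequality is unavailable for them, and no De~Giorgi iteration on supersolutions will convert $\int_{t_0}^{t_0+\varepsilon}\!\int_K v^{p-2}=\infty$ into the rate $v\gtrsim(t-t_0)^{-1/(p-2)}$ without further structure. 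Your Stage~3 comparison also needs care: one cannot compare $v$ with the friendly giant $W_*$ on a cylinder whose bottom face sits at the blow-up time, since $W_*(\cdot,s^+)\equiv+\infty$ there; the comparison must be set up at a strictly later time where a quantitative lower bound on $v$ is already known.

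The paper circumvents all of this by arguing the dichotomy, proving the Class~$\mathfrak{B}$ and Class~$\mathfrak{M}$ theorems simultaneously. Given concentric cubes $Q_l\Subset Q_{2l}\Subset\Omega$, one solves (via infimal convolutions $v^\varepsilon$) an auxiliary Dirichlet problem in the annular shell $(Q_{2l}\setminus Q_l)\times(0,T)$ with boundary data $0$ on $\partial Q_{2l}$ and $v^\varepsilon$ on $\partial Q_l$, producing genuine \emph{solutions} $h^\varepsilon$. Either the family $\{h^\varepsilon\}$ stays locally bounded, in which case the glued function $w$ (equal to $v$ inside, $h$ outside) has the zero lateral boundary values required by Theorem~\ref{KL}, forcing $v\in L^{p-2}_{loc}$ and placing $v$ in Class~$\mathfrak{B}$; or $h^\varepsilon(x_\varepsilon,t_\varepsilon)\to\infty$ along some sequence, and then the intrinsic Harnack inequality for \emph{solutions} (Proposition~\ref{blowup}) yields the friendly-giant minorant at the limiting time $t_0$, first in the shell and then, by a reflection trick, in the inner cube as well. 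Thus the paper never attempts to locate $t_0$ from the divergence of an integral; the time $t_0$ emerges as the blow-up time of the auxiliary solutions, and Harnack is applied only where it is legitimate.
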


Notice that the infinities occupy the whole space at some instant $t_0$. As an example of a function from class $\mathfrak{M}$ we mention 
$$
\mathfrak{V}(x,t) = 
\begin{cases}
\dfrac{\mathfrak{U}(x)}{(t-t_0)^{\frac{1}{p-2}}},\quad \text{when}\quad t > t_0,\\\quad
0,\quad \text{when} \quad t \leq t_0,
\end{cases}
$$
where $\mathfrak{U} \in C(\Omega)\cap W^{1,p}_{0}(\Omega)$ is a weak solution to the elliptic equation 
$$
 \nabla\cdot\bigl(|\nabla \mathfrak{U}|^{p-2}\nabla \mathfrak{U}\bigr)+\tfrac{1}{p-2}\mathfrak{U}=0
$$
and $\mathfrak{U}>0$ in $\Omega$. 
The function $\mathfrak{V}$ is $p$-supercaloric in $\Omega \times \R,$ see equation (\ref{separable}) below. This function can serve as a minorant for all functions $v \geq 0$ in $\mathfrak{M}$.  No $\sigma$-finite measure is induced in this case. As far as we know, these functions have not yet been carefully studied. 

A function of class $\mathfrak{M}$  always affects the boundary values. Indeed, at some point on $(\xi_0,t_0)$ on the lateral boundary $\partial \Omega \times (0,T)$ it is necessary to have
$$\limsup_{(x,t)\to(\xi_0,t_0)} v(x,t) = \infty.$$
This alone does not yet prove that $v$ would belong to $\mathfrak{M}$. A convenient sufficient  condition for membership in class $\mathfrak{B}$ emerges:
\emph{If} 
\begin{equation*}
\limsup_{(x,\tau)\to(\xi,t)} v(x,\tau) < \infty
\quad\text{\emph{for every}}\quad (\xi,t)\in \partial \Omega \times (0,T),
\end{equation*}
\emph{then} $v \in \mathfrak{B}$.

It is no surprise that a parallel theory holds for the celebrated Porous Medium Equation
$$\dfrac{\partial u}{\partial t}-\Delta (u^m)=0,\qquad 1<m<\infty.$$
We refer to the monograph \cite{V} about this much studied equation.
We sketch the argument in the last section.

\section{Preliminaries}
\label{preli}

We begin with some standard notation. We consider an open domain $\Omega$ in $\Rn$ and denote by $L^p(t_1,t_2;W^{1,p}(\Omega))$ the Sobolev space of functions $v = v(x,t)$ such that for almost every $t, t_1 \leq t \leq t_2$, the function $x \mapsto v(x,t)$ belongs to
$W^{1,p}(\Omega)$ and
$$\int_{t_1}^{t_2}\int_{\Omega}\left(|v(x,t)|^p + |\nabla v(x,t)|^p\right)\dif x \dif t \; < \;\infty,$$
where $\nabla v = (\tfrac{\partial v}{\partial x_1},\cdots,\tfrac{\partial v}{\partial x_n})$ is the spatial Sobolev gradient. The definitions of the local spaces $L^p(t_1,t_2;W^{1,p}_{loc}(\Omega))$ and $L^p_{loc}(t_1,t_2;W^{1,p}_{loc}(\Omega))$ are analogous.
We denote $\Omega_{t_1,t_2} = \Omega \times  (t_1,t_2)$ and recall that the \emph{parabolic boundary} of $\Omega_{t_1,t_2}$ is the set 
$\big(\overline{\Omega}\times \{t_1\}\big)\cup\big(\partial  \Omega \times  (t_1,t_2)\big)$.

\begin{definition}
A function $u \in L^p(t_1,t_2;W^{1,p}(\Omega))$ is a \emph{weak solution} of the Evolutionary $p$-Laplace Equation in $\Omega_{t_1,t_2}$, if 
\begin{equation}
\label{solution}
 \int_{t_1}^{t_2}\int_{\Omega}\left(-u\frac{\partial \varphi}{\partial t} + \langle|\nabla u|^{p-2}\nabla u, \nabla \varphi\rangle\right) \dif x \dif t=0
\end{equation}
for every $\varphi \in C_0^{\infty}(\Omega_{t_1,t_2})$. If, in addition, $u$ is continuous, then it is called a \emph{$p$-caloric function}. Further, we say that $u$ is a \emph{weak supersolution}, if the above integral is non-negative for all non-negative $\varphi\in C_0^{\infty}(\Omega_{t_1,t_2})$. If the integral is non-positive instead, we say that $u$ is a \emph{weak subsolution}.
\end{definition}

By parabolic regularity theory, a weak solution is locally H\"{o}lder continuous after a possible redefinition in a set of $n+1$-dimensional Lebesgue measure zero, see \cite{T2} and \cite{Db}. In addition, a weak supersolution is upper semicontinuous with the same interpretation, cf. \cite{K}. 

\begin{lemma}[Comparison Principle]
Assume that  
$$u,\,v\in L^p\bigl(t_1,t_2;W^{1,p}(\Omega)\bigr) \cap C\bigl(\overline{\Omega} \times [t_1,t_2)\bigr).$$
If $v$ is a weak supersolution and $u$ a weak subsolution  in $\Omega_{t_1,t_2}$ such that $v \geq u$ on the parabolic boundary of 
$\Omega_{t_1,t_2}$, then $ v \geq u$ in the whole  $\Omega_{t_1,t_2}$.
\end{lemma}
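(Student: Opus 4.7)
The plan is an energy estimate on the function $(u-v)_+$. Subtracting the defining inequalities for the subsolution $u$ and the supersolution $v$ gives, for every nonnegative $\varphi\in C_0^\infty(\Omega_{t_1,t_2})$,
\begin{equation*}
\int_{t_1}^{t_2}\int_\Omega\left(-(u-v)\frac{\partial\varphi}{\partial t}+\langle|\nabla u|^{p-2}\nabla u-|\nabla v|^{p-2}\nabla v,\nabla\varphi\rangle\right)\dif x\dif t\leq 0.
\end{equation*}
The natural choice $\varphi=(u-v)_+$ (truncated in time) is not immediately admissible because neither $u$ nor $v$ need be differentiable in $t$. I would therefore regularize in time via Steklov averages $u_h(x,t)=h^{-1}\int_t^{t+h}u(x,\tau)\dif\tau$ and analogously $v_h$. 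Averaging the weak formulations yields, for a.e.\ $t\in(t_1,t_2-h)$, the pointwise-in-$t$ inequality
\begin{equation*}
\int_\Omega\frac{\partial u_h}{\partial t}\psi\dif x+\int_\Omega\langle(|\nabla u|^{p-2}\nabla u)_h,\nabla\psi\rangle\dif x\leq 0
\end{equation*}
for every nonnegative $\psi\in W^{1,p}_0(\Omega)$, together with the reversed inequality for $v_h$.

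Subtracting these, testing with $\psi=(u_h-v_h)_+$, and integrating in $t$ from $t_1$ to a generic $s<t_2-h$ gives
\begin{equation*}
\tfrac12\int_\Omega(u_h-v_h)_+^2(x,s)\dif x-\tfrac12\int_\Omega(u_h-v_h)_+^2(x,t_1)\dif x+J_h\leq 0,
\end{equation*}
where $J_h$ denotes the space-time integral of $\langle(|\nabla u|^{p-2}\nabla u-|\nabla v|^{p-2}\nabla v)_h,\nabla(u_h-v_h)_+\rangle$ over $\Omega\times(t_1,s)$. This test function is legitimate: continuity on $\overline\Omega\times[t_1,t_2)$ together with $v\geq u$ on the lateral boundary forces $(u-v)_+(\cdot,t)\in W^{1,p}_0(\Omega)$ for every $t$, and this zero trace is preserved by the spatial part of the Steklov regularization.

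Finally I would pass to the limit $h\to 0$. The $L^2$-convergence of the Steklov averages in space handles the first term, continuity up to $t=t_1$ combined with $u\leq v$ there makes the second term vanish, and the standard strong $L^{p'}$ and $L^p$ convergence of Steklov averages identifies $\lim_{h\to 0}J_h$ with $\int_{t_1}^s\int_\Omega\langle|\nabla u|^{p-2}\nabla u-|\nabla v|^{p-2}\nabla v,\nabla(u-v)_+\rangle\dif x\dif t$. The elementary monotonicity inequality $\langle|a|^{p-2}a-|b|^{p-2}b,a-b\rangle\geq 0$, valid for $p\geq 2$, forces this integrand to be nonnegative on $\{u>v\}$ and zero elsewhere, so $\int_\Omega(u-v)_+^2(x,s)\dif x=0$ for every $s\in(t_1,t_2)$. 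This is the desired conclusion $u\leq v$. The principal obstacle is the rigorous execution of the Steklov step, in particular the strong convergence of gradients needed to identify $\lim_{h\to 0}J_h$; once this is in place the monotonicity of the operator does the rest.
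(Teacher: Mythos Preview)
The paper does not prove this lemma; it is stated as a standard fact and then immediately used to motivate the definition of $p$-supercaloric functions. Your approach --- subtract the two weak formulations, regularize in time with Steklov averages, test with $(u_h-v_h)_+$, and invoke the monotonicity inequality $\langle|a|^{p-2}a-|b|^{p-2}b,\,a-b\rangle\ge 0$ --- is precisely the textbook argument for this comparison principle (see e.g.\ \cite{Db}), and it is correct in outline.

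A couple of small technical remarks. First, to ensure that $(u_h-v_h)_+(\cdot,t)\in W^{1,p}_0(\Omega)$ you are implicitly using that $u_h,v_h$ inherit continuity on $\overline{\Omega}$ from $u,v$; this is fine since the Steklov average is taken only in $t$. Second, your treatment of the initial term tacitly assumes enough integrability to pass to the limit in $\int_\Omega(u_h-v_h)_+^2(\cdot,t_1)\,\dif x$; if $\Omega$ is bounded this follows from the uniform convergence furnished by the joint continuity, while for unbounded $\Omega$ one typically inserts a spatial cutoff and lets it exhaust $\Omega$ afterwards. Third, the conclusion you obtain is $(u-v)_+=0$ for a.e.\ $s$, and the upgrade to all $s\in(t_1,t_2)$ uses the assumed continuity. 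None of these points is a genuine gap; the argument is sound.
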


The Comparison Principle is used to define the class of $p$-supercaloric functions.

\begin{definition}
\label{supercaloric} 
 A function $v:\Omega_{t_1,t_2}\to (-\infty,\infty]$ is called  \emph{$p$-supercaloric}, if
\begin{itemize}
\item[\rm{(i)}]  $v$ is lower semicontinuous,
\item[\rm{(ii)}]  $v$ is finite in a dense subset,
\item[\rm{(iii)}]  $v$ satisfies the comparison principle on each interior cylinder
 $D_{t'_1,t'_2}\Subset\Omega_{t_1,t_2}$: If $h \in C(\overline{D_{t'_1,t'_2}})$
 is a $p$-parabolic function in  $D_{t'_1,t'_2}$, and if $h \leq v$ on the parabolic boundary of $D_{t'_1,t'_2}$,
then $h \leq v$ in the whole $D_{t'_1,t'_2}$.
\end{itemize}
\end{definition}

We recall a fundamental result for \emph{bounded} functions, which is also applicable to more general equations.

\begin{thm}
\label{annali} Let $p \geq 2$. If $v$ is a $p$-supercaloric function that is locally bounded from above in $\Omega_T$, then the Sobolev gradient $\nabla v$ exists and $\nabla v \in L^p_{loc}(\Omega_T)$. Moreover, $v \in L^p_{loc}(0,T;W^{1,p}_{loc}(\Omega))$ and $v$ is a weak supersolution.
\end{thm}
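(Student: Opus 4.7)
The plan is to approximate $v$ by a sequence of locally Lipschitz $p$-supercaloric functions, derive a uniform Caccioppoli estimate on the gradients, and pass to the limit. Fix subcylinders $Q' \Subset Q \Subset \Omega_T$. Since $v$ is lower semicontinuous and locally bounded above, after adding a constant we may assume $0 \leq v \leq M$ on $Q$.

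First I would regularise $v$ by the parabolic inf-convolution
$$v_\varepsilon(x,t) = \inf_{(y,s)\in Q}\left\{v(y,s) + \frac{1}{q\varepsilon^{q-1}}\bigl(|x-y|^q + |t-s|^q\bigr)\right\},\qquad q > p.$$
On slightly shrunken cylinders, $v_\varepsilon$ is locally Lipschitz in $(x,t)$, uniformly bounded, and $v_\varepsilon \nearrow v$ pointwise as $\varepsilon \to 0^+$. Using the translation invariance of the equation together with the comparison principle built into Definition~\ref{supercaloric}, one checks that $v_\varepsilon$ is itself $p$-supercaloric. Being Lipschitz it lies in $W^{1,p}_{loc}$; this, combined with the comparison against $p$-caloric functions in interior subcylinders, upgrades $v_\varepsilon$ to a bounded \emph{weak} supersolution of the Evolutionary $p$-Laplace Equation.

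Next I would establish a Caccioppoli-type bound that is uniform in $\varepsilon$. Choosing $\eta \in C_0^\infty(Q)$ with $\eta \equiv 1$ on $Q'$ and testing the weak supersolution formulation against $\eta^p v_\varepsilon$ (after a Steklov averaging in time to make $\partial_t v_\varepsilon$ admissible), one absorbs the gradient term via Young's inequality to obtain
$$\int_0^T\!\!\int_\Omega |\nabla v_\varepsilon|^p \eta^p \dif x \dif t \leq C\int_0^T\!\!\int_\Omega\bigl(v_\varepsilon^p|\nabla\eta|^p + v_\varepsilon^2|\partial_t\eta|\bigr)\dif x\dif t \leq C(M,\eta).$$
Hence $\{\nabla v_\varepsilon\}$ is bounded in $L^p(Q')$, and along a subsequence $\nabla v_\varepsilon \rightharpoonup g$ weakly in $L^p(Q')$. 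Since $v_\varepsilon \to v$ in $L^p(Q')$ by bounded monotone convergence, $g$ must coincide with the distributional gradient of $v$, giving $\nabla v \in L^p_{loc}(\Omega_T)$ and $v \in L^p_{loc}(0,T;W^{1,p}_{loc}(\Omega))$.

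Finally, to conclude that $v$ itself is a weak supersolution I would pass to the limit in the supersolution inequality for $v_\varepsilon$. The transport term $-v_\varepsilon \partial_t \varphi$ converges by the $L^p$-convergence of $v_\varepsilon$, but the flux $|\nabla v_\varepsilon|^{p-2}\nabla v_\varepsilon$ requires almost-everywhere convergence of the gradients. This is the principal obstacle. I would obtain it by the standard monotonicity argument for the $p$-Laplacian: test the supersolution inequality for $v_\varepsilon$ against a cutoff multiple of $v_\varepsilon - v_{\varepsilon'}$, and exploit the elementary inequality
$$\langle |a|^{p-2}a - |b|^{p-2}b,\; a - b\rangle \geq c_p |a-b|^p \qquad (p \geq 2)$$
to show that $\{v_\varepsilon\}$ is Cauchy in $W^{1,p}(Q')$. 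Strong convergence then transfers to the nonlinear flux, the supersolution inequality survives in the limit, and $v$ is identified as a weak supersolution, completing the proof.
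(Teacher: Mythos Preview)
The paper does not give its own proof of this theorem; it simply cites two external arguments --- the obstacle-problem approach of \cite{KL1} and the infimal-convolution approach of \cite{LM1}. Your outline follows the spirit of the latter, so the comparison is really against \cite{LM1}. Two concrete problems stand out.

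\medskip
\textbf{The main gap.} The sentence ``Being Lipschitz it lies in $W^{1,p}_{loc}$; this, combined with the comparison against $p$-caloric functions in interior subcylinders, upgrades $v_\varepsilon$ to a bounded \emph{weak} supersolution'' hides the entire difficulty. Membership in $W^{1,p}_{loc}$ together with the comparison principle does \emph{not} by itself yield the integral inequality $\int(-v_\varepsilon\varphi_t + |\nabla v_\varepsilon|^{p-2}\nabla v_\varepsilon\cdot\nabla\varphi)\dif x\dif t \geq 0$; this implication is essentially a special case of the theorem you are proving. In \cite{LM1} this step is carried out by exploiting the \emph{semiconcavity} of the infimal convolution: $v_\varepsilon$ has second-order Taylor expansions almost everywhere, the comparison principle forces the equation to hold pointwise a.e.\ in the viscosity sense, and one then integrates. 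In \cite{KL1} the step is replaced entirely by solving an obstacle problem with $v$ (or $v_\varepsilon$) as obstacle and showing the solution coincides with the obstacle. Either way, real work is required here, and your sketch skips it.

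\medskip
\textbf{A sign error in the Caccioppoli step.} Testing the \emph{super}solution inequality with $\varphi=\eta^p v_\varepsilon\ge 0$ yields
\[
\int \eta^p|\nabla v_\varepsilon|^p \;\ge\; \int v_\varepsilon\,\partial_t(\eta^p v_\varepsilon) - p\int \eta^{p-1}v_\varepsilon\,|\nabla v_\varepsilon|^{p-2}\nabla v_\varepsilon\cdot\nabla\eta,
\]
which is a \emph{lower} bound on the energy, not an upper bound. For a supersolution bounded by $M$ the correct test function is $\varphi=\eta^p(M-v_\varepsilon)\ge 0$; then the sign in front of $\eta^p|\nabla v_\varepsilon|^p$ flips and the absorption via Young's inequality goes through. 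This is easily repaired, but as written the uniform estimate is not established.

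\medskip
Once those two points are fixed, the remaining strategy (weak $L^p$-compactness of the gradients, then strong convergence via the monotonicity inequality to pass the nonlinear flux to the limit) is sound and matches the infimal-convolution route in \cite{LM1}.
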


A proof based on auxiliary obstacle problems was given in  \cite{KL1}, Theorem 1.4. A more direct proof with infimal convolutions can be found in  \cite{LM1}. 

In order to apply the previous theorem, we need \emph{bounded} functions. The truncations 
$$v_j(x,t) = \min\{v(x,t),j\},\qquad j=1,2,\dots,$$ are $p$-supercaloric, if $v$ is, and since they are bounded from above, they are also weak supersolutions. Thus $\nabla v_j$ is at our disposal and estimates derived from the inequality
\begin{equation}
\int_0^T\int_{\Omega}\left(-v_j\frac{\partial \varphi}{\partial t} + \langle|\nabla v_j|^{p-2}\nabla v_j, \nabla \varphi\rangle\right) \dif x \dif t \geq 0,
\end{equation}
where $\varphi \geq 0$ and  $\varphi \in C_0^{\infty}(\Omega_T)$, are available.
The starting point for our proof is the following theorem  for the truncated functions.

\begin{thm}
\label{KL}
Let $p > 2$. Suppose that $v \geq 0$ is a $p$-supercaloric function in $\Omega_T$ with initial values $v(x,0) = 0$ in $\Omega$. If 
$v_j\in L^p(0,T;W^{1,p}_0(\Omega))$ for every $j = 1,2,\dots$,
then 
\begin{itemize}
\item[\rm{(i)}] $v\in L^q(\Omega_{T_1})$ whenever $q < p-1+\frac{p}{n}$ and $T_1 < T$,
\item[\rm{(ii)}] the Sobolev gradient $\nabla u$ exists and $\nabla v \in L^{q'}(\Omega_{T_1})$ whenever $q' < p-1+\frac{1}{n+1}$ and $T_1 < T$.
\end{itemize}
\end{thm}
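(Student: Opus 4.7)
The plan is to exploit Theorem \ref{annali} to treat each truncation $v_j=\min\{v,j\}$ as a bounded nonnegative weak supersolution with Sobolev gradient in $L^p_{\mathrm{loc}}$, then derive a nonlinear Caccioppoli-type estimate whose constant is independent of $j$, and finally convert it into the claimed integrability exponents via Hölder and the parabolic Sobolev embedding. Because by hypothesis $v_j\in L^p(0,T;W^{1,p}_0(\Omega))$ and $v_j(\cdot,0)=0$, one can test with functions that do not need a spatial cutoff and the boundary trace at $t=0$ vanishes.

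The starting inequality comes from inserting
$$\varphi_\varepsilon(x,t)=\zeta(t)\,\bigl[1-(1+v_j)^{-\varepsilon}\bigr],\qquad 0<\varepsilon<1,$$
into the weak supersolution inequality, with $\zeta$ a smooth time cutoff supported in $[0,T_1]$. A standard Steklov-average computation converts the time derivative into a nonnegative boundary term involving the primitive $\Psi_\varepsilon(s)=\int_0^s\bigl[1-(1+\sigma)^{-\varepsilon}\bigr]\dif\sigma$, which vanishes at $t=0$ by the initial condition. The elliptic term produces $\varepsilon\,\zeta\,|\nabla v_j|^p(1+v_j)^{-1-\varepsilon}$, so that after routine manipulations one obtains
\begin{equation*}
\sup_{0<t<T_1}\int_\Omega \Psi_\varepsilon(v_j(\cdot,t))\dif x\;+\;\varepsilon\int_0^{T_1}\!\!\int_\Omega\frac{|\nabla v_j|^p}{(1+v_j)^{1+\varepsilon}}\dif x\dif t\;\le\; C_\varepsilon,
\end{equation*}
uniformly in $j$. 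Since $\Psi_\varepsilon(s)\ge c_\varepsilon\,s$ for $s$ large, this already yields a uniform bound $\|v_j\|_{L^\infty(0,T_1;L^1(\Omega))}\le C_\varepsilon$.

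The next step is to split
$$|\nabla v_j|^{q'}=\Bigl(\tfrac{|\nabla v_j|^p}{(1+v_j)^{1+\varepsilon}}\Bigr)^{q'/p}\bigl(1+v_j\bigr)^{q'(1+\varepsilon)/p}$$
and apply Hölder with exponent $p/q'$, reducing a uniform $L^{q'}$ bound on $\nabla v_j$ to a uniform $L^\alpha$ bound on $v_j$ with $\alpha=q'(1+\varepsilon)/(p-q')$. Coupling the $L^\infty L^1$ bound with the gradient bound through the parabolic interpolation inequality
\begin{equation*}
\int_0^{T_1}\!\!\int_\Omega |w|^{q'(n+1)/n}\dif x\dif t\;\le\; C\,\|w\|_{L^\infty(0,T_1;L^1(\Omega))}^{q'/n}\int_0^{T_1}\!\!\int_\Omega|\nabla w|^{q'}\dif x\dif t
\end{equation*}
applied to $w=v_j$ improves the integrability of $v_j$ to $L^{q'(n+1)/n}$. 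A short algebraic check shows that the loop between the Hölder step and the parabolic Sobolev step closes as long as $\alpha\le q'(n+1)/n$, that is, $q'\le p-n(1+\varepsilon)/(n+1)$; the fixed point of the resulting recursion on the integrability of $v_j$ is $p(n+1)/n-1-\varepsilon=p-1+p/n-\varepsilon$. Letting $\varepsilon\to 0$ produces the sharp pair $q<p-1+p/n$ in (i) and $q'<p-1+1/(n+1)$ in (ii), with bounds independent of $j$.

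The passage to the limit $j\to\infty$ is standard: monotone convergence yields $v_j\uparrow v$ and hence (i) for $v$, while weak compactness in $L^{q'}$ identifies the weak limit of $\nabla v_j$ with the Sobolev gradient of $v$, giving (ii). The main obstacle is the \emph{bookkeeping of exponents} in the bootstrap: the loss of a factor $\varepsilon$ from the energy weight $(1+v_j)^{-1-\varepsilon}$ is exactly what forces the strict inequalities in (i) and (ii) and pins down the critical values $p-1+p/n$ and $p-1+1/(n+1)$; the rest is parabolic machinery that is routine once the test function has been identified.
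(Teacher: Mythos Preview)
The paper does not prove this theorem at all; it simply cites \cite{KL1}. Your sketch is in the spirit of that reference---the Kilpel\"ainen--Mal\'y type test function, the H\"older splitting of $|\nabla v_j|^{q'}$, and the parabolic Gagliardo--Nirenberg step are all the right ingredients---but there is a genuine gap at the very first step.

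With $\varphi_\varepsilon=\zeta(t)\bigl[1-(1+v_j)^{-\varepsilon}\bigr]$ one has $g'(s)=\varepsilon(1+s)^{-1-\varepsilon}>0$, so both the time-boundary term $\int_\Omega\Psi_\varepsilon(v_j(\cdot,\tau))$ and the elliptic term $\varepsilon\int\zeta(1+v_j)^{-1-\varepsilon}|\nabla v_j|^p$ enter the supersolution inequality with the \emph{same} sign. After the Steklov computation you obtain
\[
\int_\Omega\Psi_\varepsilon\bigl(v_j(\cdot,\tau)\bigr)\;+\;\varepsilon\int_0^\tau\!\!\int_\Omega\frac{|\nabla v_j|^p}{(1+v_j)^{1+\varepsilon}}\dif x\dif t\;\ge\;0,
\]
which is trivially true and gives no upper bound. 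Your displayed estimate $\le C_\varepsilon$ uniformly in $j$ therefore does not follow, and in particular the $L^\infty(0,T_1;L^1(\Omega))$ bound you then extract is unjustified.

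The argument in \cite{KL1}/\cite{KL2} tests with $(1+v_j)^{-\varepsilon}$ (no ``$1-$''). Because $g'<0$ the two terms now have opposite signs and one gets the \emph{one-sided} estimate
\[
\varepsilon\int_0^\tau\!\!\int_\Omega\frac{|\nabla v_j|^p}{(1+v_j)^{1+\varepsilon}}\dif x\dif t\;\le\;\frac{1}{1-\varepsilon}\int_\Omega(1+v_j(\cdot,\tau))^{1-\varepsilon}\dif x.
\]
The right-hand side is \emph{not} bounded uniformly in $j$ a priori; it must be fed back into the loop. One applies the Sobolev inequality in $W^{1,p}_0(\Omega)$ to the power $(1+v_j)^{(p-1-\varepsilon)/p}$, combines it with the weighted energy above, and runs a genuine two-quantity recursion in which the $L^\infty_tL_x$ norm and the weighted gradient integral are improved simultaneously until the iteration stabilises at the sharp exponents. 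Your later paragraphs describe this loop and its fixed point correctly, but the $L^\infty L^1$ bound you use as a starting point is in fact part of the \emph{output} of the iteration, not an input. The hypothesis $v_j\in L^p(0,T;W^{1,p}_0(\Omega))$ is exactly what makes the Sobolev step available without spatial cutoffs, so that the recursion closes without lower-order error terms.
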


\begin{proof}
See \cite{KL1}.
\end{proof}

We remark that the summability exponents are sharp. It is decisive that the boundary values are zero. The functions of class $\mathfrak{M}$ cannot satisfy this requirement. As we shall see, those of class $\mathfrak{B}$ can be modified so that the theorem above applies.

The standard Caccioppoli estimates are valid. We recall the following simple version, which will suffice for us.

\begin{lemma}[Caccioppoli]
\label{Caccioppoli}
Let $p > 2$. If $u \geq 0$ is a weak subsolution in $\Omega_T$, then the estimate
$$\begin{aligned}
\int_{t_1}^{t_2}\int_{\Omega}&\zeta^p|\nabla u|^p\dif x \dif t+ \underset{t_1<t<t_2}{\esssup}\int_{\Omega}\zeta(x)^pu(x,t)^2 \dif x\\
&\leq C(p)\left\{\int_{t_1}^{t_2}\int_{\Omega}u^p|\nabla \zeta|^p \dif x \dif t 
+\int_{\Omega}\zeta(x)^pu(x,t)^2\Big\vert_{t_1}^{t_2} \dif x   \right\}
\end{aligned}$$
holds for every $\zeta = \zeta(x) \geq 0$ in $C_0^{\infty}(\Omega)$, $0 < t_1 < t_2 < T$.
\end{lemma}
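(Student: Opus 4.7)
The plan is to test the weak subsolution inequality against $\varphi = \zeta^p u\,\eta_\delta$, where $\eta_\delta(t)$ is a piecewise-linear cut-off in $t$ that rises from $0$ to $1$ near $t_1$ and falls from $1$ to $0$ near a chosen time $t^{*} \in (t_1, t_2)$. Since $u \geq 0$ and $\zeta \geq 0$, this test function is nonnegative and hence admissible. The test function involves $u$ itself, so a direct substitution is illegitimate: I would first replace $u$ by its Steklov time-average $u_h$, apply the subsolution inequality to $\zeta^p u_h\,\eta_\delta$, and then pass to the limit in $h \to 0$ followed by $\delta \to 0$.

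Because $\zeta = \zeta(x)$, one has $\nabla\varphi = \eta_\delta\bigl(\zeta^p\nabla u + p\zeta^{p-1}u\,\nabla\zeta\bigr)$, while the parabolic term produces $u\,\partial_t u = \tfrac12\,\partial_t(u^2)$. Carrying out the time integration by parts on the Steklov level and then letting the regularisations vanish yields
\begin{equation*}
\tfrac{1}{2}\int_\Omega \zeta^p u(x,t^{*})^2 \dif x
+ \int_{t_1}^{t^{*}}\!\int_\Omega \zeta^p |\nabla u|^p \dif x \dif t
\leq \tfrac{1}{2}\int_\Omega \zeta^p u(x,t_1)^2 \dif x
+ p \int_{t_1}^{t^{*}}\!\int_\Omega \zeta^{p-1}u\,|\nabla u|^{p-1}\,|\nabla\zeta| \dif x \dif t.
\end{equation*}

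The cross term on the right is handled by Young's inequality,
\begin{equation*}
p\,\zeta^{p-1}u\,|\nabla u|^{p-1}\,|\nabla\zeta|
\leq \tfrac{1}{2}\,\zeta^p |\nabla u|^p + C(p)\,u^p|\nabla\zeta|^p,
\end{equation*}
and the first summand is absorbed into the left-hand side. Taking $t^{*} = t_2$ in the gradient integral and then taking the essential supremum over $t^{*} \in (t_1, t_2)$ in the $L^2$ term produces the two expressions on the left of the claimed estimate; the boundary contribution at $t = t_1$ (and, if convenient, at $t = t_2$) from the time integration by parts is precisely the $|_{t_1}^{t_2}$ term on the right.

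The principal technical obstacle is the Steklov approximation itself: a weak sub/supersolution is a priori only Sobolev regular in the spatial variables and has no classical time derivative, so pairing $\partial_t\varphi$ against $u$ when $\varphi$ depends on $u$ requires this regularisation to make the key identity $u\,\partial_t u = \tfrac12\,\partial_t(u^2)$ legitimate. Once the approximation is carried out, the remaining steps -- product rule for $\nabla\varphi$, Young's inequality, and the absorption argument -- are entirely routine.
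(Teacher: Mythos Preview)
Your proposal is correct and follows exactly the approach the paper indicates: the paper's proof is only a one-line sketch (``a formal calculation with the test function $\phi = u\zeta^p$ gives the inequality''), and you have filled in precisely the standard details --- Steklov averaging in time, the product rule for $\nabla\varphi$, Young's inequality with absorption, and the supremum over the intermediate time $t^{*}$.
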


\begin{proof} 
A formal calculation with the test function $\phi = v\zeta^p$ gives the inequality. See \cite{Db}, \cite{KL1}.
\end{proof}

\subsection*{Infimal Convolutions}
The infimal convolutions preserve the $p$-supercaloric functions and are Lipschitz continuous. Thus they are convenient approximations. If $v\geq 0$ is lower semicontinuous and finite in a dense subset of $\Omega_T$, then the \emph{infimal convolution}
$$v^{\varepsilon}(x,t) = \inf_{(y,\tau)\in \Omega_{T}}\Bigl\{v(y,\tau)+\frac{1}{2\varepsilon}\bigl(|x-y|^2+|t-\tau|^2\bigr)\Bigr\}$$
is well defined. It has the properties 
\begin{itemize}
\item\quad $v^{\varepsilon}(x,t) \nearrow v(x,t)$ as $\varepsilon \to 0$,
\item \quad $v^{\varepsilon}$ is locally Lipschitz continuous in $\Omega_T$,
\item \quad the Sobolev derivatives $\tfrac{\partial v^{\varepsilon}}{\partial t}$ and $\nabla  v^{\varepsilon}$ exist and belong to $L^{\infty}_{loc}(\Omega_T)$.
\end{itemize}
Assume now that  $v$ is a $p$-supercaloric function in $\Omega_T$. Given a subdomain $D\Subset \Omega_T$, the  above $v^{\varepsilon}$ is  a $p$-supercaloric function in $D$,
provided that $\varepsilon$ is small enough, see \cite{KL1}.

\section{A Separable Minorant}
\label{ASep}

We begin with observations, which will simplify some arguments later.

\subsection*{Extension to the past} If $v$ is a non-negative $p$-supercaloric function in $\Omega_T$, then the extended function
\begin{equation*}
\label{extension}
v(x,t) =
\begin{cases}
v(x,t), \quad \text{when} \quad 0<t<T,\\
\quad0,\quad \text{when}\quad t\leq 0,
\end{cases}
\end{equation*}
is $p$-supercaloric in  $\Omega \times (-\infty,T)$. We use the same notation for the extended function.

\subsection*{A separable minorant} Separation of variables suggests that there are $p$-caloric functions of the type
$$v(x,t) = (t-t_0)^{-\frac{1}{p-2}}u(x).$$
Indeed, if  $\Omega$ is a  domain of finite measure, there exists a $p$-caloric function of the form
\begin{equation}
\label{separable}
\mathfrak{V}(x,t) =
\frac{\mathfrak{U}(x)}{(t-t_0)^{\frac{1}{p-2}}},\quad \text{when}\quad t > t_0,
\end{equation}
where $\mathfrak{U} \in C(\Omega)\cap W^{1,p}_{0}(\Omega)$ is a weak solution to the elliptic equation 
\begin{equation}
\label{elliptic}
 \nabla \cdot\bigl(|\nabla \mathfrak{U}|^{p-2}\nabla \mathfrak{U}\bigr)+\tfrac{1}{p-2}\mathfrak{U}=0
\end{equation}
and $\mathfrak{U}>0$ in $\Omega$.  The solution $\mathfrak{U}$ is unique\footnote{Unfortunately, the otherwise reliable paper [\textsc{J. Garci'a Azorero, I. Peral Alonso}: {\it Existence and nonuniqueness for the $p$-Laplacian: Nonlinear eigenvalues}, Communications in Partial Differential Equations \textbf{12}, 1987, pp. 1389--1430], contains a misprint exactly for those parameter values that would yield this function.}. (Actually, $\mathfrak{U} \in C^{1,\alpha}_{loc}(\Omega)$ for some exponent $\alpha = \alpha(n,p) > 0$.) The extended function
\begin{equation}
\label{separable}
\mathfrak{V}(x,t) = 
\begin{cases}
\dfrac{\mathfrak{U}(x)}{(t-t_0)^{\frac{1}{p-2}}},\quad \text{when}\quad t > t_0,
\\
0,\quad \text{when} \quad t \leq t_0.
\end{cases}
\end{equation}
is $p$-supercaloric in $\Omega \times \R$. The existence of $\mathfrak{U}$ follows by the direct method in the Calculus of Variations, when the quotient
$$J(w) = \dfrac{\int_{\Omega}|\nabla w|^p\dif x}{\Bigl(\int_{\Omega}w^2\dif x\Bigr)^{\frac{p}{2}}}$$
is minimized among all functions $w$ in $W^{1,p}_0(\Omega)$ with $w\not \equiv 0$. Replacing $w$ by its absolute value $|w|$,
we may assume that all functions are non-negative. Sobolev's and H\"{o}lder's inequalities imply
$$J(w) \geq c(p,n)|\Omega|^{1-\frac{p}{n}-\frac{p}{2}},$$
for some $c(p,n) > 0$ and so $J_0 = \inf_{w}J(w) > 0$. Choose a minimizing sequence of admissible normalized functions
$w_{j}$ with
$$\lim_{j \to \infty}J(w_{j}) = J_0\quad\text{and}\quad \|w_{j}\|_{L^p(\Omega)} = 1.$$
By compactness, we may extract a subsequence such that
$\nabla w_{j_k}\rightharpoonup \nabla w$ weakly in  $L^p({\Omega})$ and
$w_{j_k}\rightarrow w$ strongly in $L^p({\Omega})$
for some function $w$.
The weak lower semicontinuity of the integral implies that
$$J(w) \leq \liminf_{k\to \infty}J(w_{j_k}) = J_0.$$
Since $w\in W^{1,p}_0(\Omega)$ this means that $w$ is a minimizer. We have $w\geq0$, and $w\not\equiv 0$ because of the normalization.

It follows that $w$ has to be a weak solution of the Euler--Lagrange equation
$$ \nabla \cdot\bigl(|\nabla w|^{p-2}\nabla w\bigr) + J_0 \|w\|_{L^p(\Omega)}^{p-2}w=0$$
with $\|w\|_{L^p(\Omega)} = 1$. By elliptic regularity theory $w\in C(\Omega)$, see \cite{T1}. 
Finally, since $ \nabla \cdot\bigl(|\nabla w|^{p-2}\nabla w\bigr) \leq 0$ in the weak sense and  $w \geq 0$
we have that $w > 0$ by the Harnack inequality \cite{T1}. A normalization remains to be done. The function
$$\mathfrak{U} = Cu,\quad \text{where} \quad J_0C^{p-2} = \tfrac{1}{p-2},$$
will do.

\subsection*{One dimensional case}
In one  dimension the equation is
$$\dfrac{d}{dx}\Bigl(|\mathfrak{U}'|^{p-2}\mathfrak{U}'\Bigr) + \dfrac{1}{p-2}\mathfrak{U} = 0,\qquad 0\leq x\leq L.$$
 It has the first integral
$$\dfrac{p-1}{p}|\mathfrak{U}'|^p + \dfrac{\mathfrak{U}^2}{2(p-1)} = C$$
in the interval $[0,L]$. Now $\mathfrak{U}(0) = 0 = \mathfrak{U}(L)$ and $\mathfrak{U}'(\tfrac{L}{2}) = 0$. This determines the constant of integration in terms of $\mathfrak{U}'(0)$ or of the maximal value $M= \max{\mathfrak{U}} = \mathfrak{U}(\tfrac{L}{2})$. Solving for $\mathfrak{U}'$, separating the variables, and integrating from $0$ to $\tfrac{L}{2}$, one easily obtains the parameters
$$M = C_1(p)L^{\frac{p}{p-2}}\quad\text{and}\quad \mathfrak{U}'(0) = -\mathfrak{U}'(L) = C_2(p)L^{\frac{2}{p-2}}.$$
The constants can be evaluated.  In passing, we mention that $\tfrac{\mathfrak{U}(x)}{M}$ has interesting properties as a special function.

\section{Harnack's Convergence Theorem}

A known phenomenon for an increasing sequence of non-negative $p$-caloric functions is described in this section. The analytic tool is an intrinsic version of
Harnack's inequality, see \cite{dB}, pp. 157--158, \cite{DG1}, and \cite{DG2}

\begin{lemma}[Harnack's inequality] Let $p>2$. There are constants $C$ and $\gamma$, depending only on $n$ and $p$, such that if $u > 0$ is a lower semicontinuos weak solution in $$B(x_0,4R)\times (t_0-4\theta,t_0+4\theta),\quad\text{where}\quad \theta = \frac{CR^p}{u(x_0,t_0)^{p-2}},$$ 
then the inequality
\begin{equation}
\label{Harnack}
u(x_0,t_0) \leq \gamma \inf_{B_R(x_0)}u(x,t_0+\theta)
\end{equation}
is valid.
\end{lemma}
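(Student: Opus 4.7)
The plan is to exploit the intrinsic scaling that compensates for the inhomogeneity of the equation. Because the equation $u_t - \nabla\cdot(|\nabla u|^{p-2}\nabla u) = 0$ is not homogeneous of degree one in $u$, the right geometric cylinders must depend on the size of the solution at the reference point. Setting $M = u(x_0,t_0)$ and $\theta = CR^p/M^{p-2}$, a direct calculation shows that
$$v(y,s) \;=\; \frac{1}{M}\,u\bigl(x_0+Ry,\; t_0+\theta s\bigr)$$
satisfies the same equation on a cylinder of dimensionless size, with $v(0,0)=1$. Thus we may reduce to the normalized case $M=1$, $R=1$, $\theta=C$.

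The core technical step is an \emph{expansion of positivity} lemma: if $u(x_0,t_0)=1$, then lower semicontinuity yields a ball where $u(\cdot,t_0)\geq \tfrac12$, and this pointwise lower bound must spread. I would derive it from a De Giorgi-type measure lemma for truncations $(k-u)_+$, using the Caccioppoli estimate of Lemma \ref{Caccioppoli} (applied to $k-u$, which is a subsolution) together with the parabolic Sobolev embedding to control the measure of $\{u<k\}$ along level sets. Iterating the level $k\searrow \eta$ shows that on a sub-cylinder of size comparable to the intrinsic scale, $u$ is bounded below by a small constant $\eta(n,p)>0$.

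Next I would propagate the positivity forward in time up to the level $t_0+\theta$ by comparison with a shifted Barenblatt solution \eqref{Baren}. The finite speed of propagation of Barenblatt-type subsolutions, tuned to the intrinsic time scale $\theta$, forces $u(\cdot, t_0+\theta)\geq 1/\gamma$ throughout the whole ball $B_R(x_0)$. Unwinding the scaling then produces the inequality \eqref{Harnack} with $\gamma=\gamma(n,p)$.

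The hard part is the quantitative book-keeping in the De Giorgi iteration in the degenerate regime $p>2$: one must ensure that the constants emerging from the energy inequality depend only on $n$ and $p$ and not on the size of $u$, which is precisely what the intrinsic scaling is designed to guarantee. Getting $C$ large enough so that the waiting time $\theta$ accommodates the whole iterative chain—measure lemma, $L^\infty$ estimate, and comparison with the Barenblatt profile—is the delicate point; this is worked out in full by DiBenedetto \cite{dB} and DiBenedetto--Gianazza--Vespri \cite{DG1,DG2}, from whom we quote the result.
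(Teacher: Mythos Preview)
Your sketch is a reasonable outline of the DiBenedetto--Gianazza--Vespri argument, and the paper does exactly what you do in your final sentence: it states the lemma without proof and refers to \cite{Db}, \cite{DG1}, \cite{DG2} for the details. So there is no ``paper's own proof'' to compare against---this is a quoted result. Your proposal is compatible with that, though strictly speaking the paper expects only a citation, not a proof sketch; you could drop everything but the last clause and match the paper exactly.
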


Notice that the waiting time $\theta$ depends on the solution itself. 

\begin{prop}
\label{blowup}
Suppose that we have an increasing sequence
$0 \leq h_1\leq h_2\leq h_3\leq\dots$ of $p$-caloric functions in $\Omega_T$ and denote $h = \lim_{k\to\infty}h_k$. 
If there is a sequence  $(x_k,t_k) \to (x_0,t_0)$ such that
$h_{k}(x_k,t_k)   \to +\infty$, where $ x_0\in\Omega$ and $0<t_0<T$,
then
$$\liminf_{\substack{(y,t)\to(x,t_0)\\t>t_0}}h(y,t)(t-t_0)^{\frac{1}{p-2}}>0\quad\text{for all}\quad x\in \Omega.$$
Thus, at time $t_0$,
$$\lim_{\substack{(y,t)\to(x,t_0)\\t>t_0}}h(y,t)\equiv\infty\quad\text{in}\quad \Omega.$$
\end{prop}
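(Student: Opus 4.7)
The strategy is to convert the pointwise blow-up $h_k(x_k,t_k)\to\infty$ into a uniform blow-up on a small ball via the intrinsic Harnack inequality, and then use the separable $p$-caloric function from Section \ref{ASep} as an explicit minorant whose time decay is exactly $(t-t_0)^{-1/(p-2)}$.

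\emph{Harnack on a ball.} I would fix a radius $R_0>0$ with $\overline{B(x_0,4R_0)}\subset\Omega$, set $M_k:=h_k(x_k,t_k)\to\infty$, and apply the intrinsic Harnack inequality to $h_k$ at $(x_k,t_k)$ with this radius. Writing $\theta_k=CR_0^pM_k^{-(p-2)}\to 0$ and $s_k:=t_k+\theta_k\to t_0$, this yields
$$
h_k(y,s_k)\geq \frac{M_k}{\gamma}\quad\text{for every}\quad y\in B(x_k,R_0),
$$
for all $k$ large enough that the Harnack cylinder fits in $\Omega_T$.

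\emph{Comparison with the separable minorant.} Let $\mathfrak{U}$ be the positive solution of (\ref{elliptic}) in the model ball $B(0,R_0)$, with $M=\max \mathfrak{U}$. Setting $\epsilon_k:=(\gamma M/M_k)^{p-2}\to 0$, the function
$$
\mathfrak{V}_k(x,t):=\frac{\mathfrak{U}(x-x_k)}{(t-s_k+\epsilon_k)^{1/(p-2)}}
$$
is $p$-caloric in $B(x_k,R_0)\times(s_k-\epsilon_k,T)$ and vanishes on $\partial B(x_k,R_0)$. The choice of $\epsilon_k$ forces $\mathfrak{V}_k(\cdot,s_k)\leq M/\epsilon_k^{1/(p-2)}=M_k/\gamma\leq h_k(\cdot,s_k)$ on $B(x_k,R_0)$, while on the lateral boundary $\mathfrak{V}_k=0\leq h_k$. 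The Comparison Principle gives $\mathfrak{V}_k\leq h_k$ throughout $B(x_k,R_0)\times[s_k,T)$. Letting $k\to\infty$ (so $x_k\to x_0$, $s_k\to t_0$, $\epsilon_k\to 0$) produces
$$
h(x,t)\geq\frac{\mathfrak{U}(x-x_0)}{(t-t_0)^{1/(p-2)}}\quad\text{for}\quad x\in B(x_0,R_0),\ t>t_0,
$$
and since $\mathfrak{U}(x-x_0)>0$ on the open ball, the first conclusion of the proposition holds at every point of $B(x_0,R_0)$.

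\emph{Propagation and the main obstacle.} For any $y\in B(x_0,R_0)$ the previous step gives $h(y,\tau)\to\infty$ as $\tau\downarrow t_0$, so a diagonal extraction from the increasing approximation $h_j(y,\tau)\nearrow h(y,\tau)$ delivers a new sequence $(y,\tau_j)\to(y,t_0)$ with $h_{k_j}(y,\tau_j)\to\infty$. Applying the previous two steps at $y$ extends the blow-up to a whole ball around $y$, and a finite chain of overlapping balls along a path in $\Omega$ reaches any prescribed target. The second displayed conclusion is then immediate upon dividing by $(t-t_0)^{1/(p-2)}\to 0$. The one delicate point is the calibration $\epsilon_k\sim M_k^{-(p-2)}$ in the comparison step: this is exactly the parabolic scaling $r^p\sim u^{p-2}\theta$, and it is what converts the Harnack lower bound $M_k/\gamma$ into the sharp temporal rate $(t-t_0)^{-1/(p-2)}$ in the limit. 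The remaining technicalities are soft, since the Harnack cylinder shrinks to zero in time and the propagation is a routine chain of balls.
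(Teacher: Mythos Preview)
Your proposal is correct and follows essentially the same route as the paper: intrinsic Harnack to get a uniform lower bound $h_k\geq M_k/\gamma$ on a small ball at the slightly later time $t_k+\theta_k$, comparison with the separable solution $\mathfrak{U}/(t-\text{shift})^{1/(p-2)}$ calibrated so that the time shift is of order $M_k^{-(p-2)}$, passage to the limit $k\to\infty$, and finally a chain-of-balls propagation across $\Omega$. The only cosmetic differences are that the paper keeps the comparison ball fixed at $B(x_0,R)$ and parametrizes the time shift through a constant $\Lambda$ chosen so that $\gamma\|\mathfrak{U}^R\|_\infty\leq(\Lambda CR^p)^{1/(p-2)}$, whereas you translate the ball to $B(x_k,R_0)$ and choose $\epsilon_k=(\gamma M/M_k)^{p-2}$ directly; these amount to the same calibration.
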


\begin{remark} The limit function $h$ may be finite at every point, though locally unbounded. Keep the function $\mathfrak{V}$ in mind. --- The proof will give
$$h(x,t) \geq \dfrac{\mathfrak{U}(x)}{(t-t_0)^{\frac{1}{p-2}}}\quad \text{in}\quad \Omega \times (t_0,T).$$
\end{remark}

\emph{Proof:} Let $B(x_0,4R)\Subset \Omega$. Since 
$$\theta_k = \dfrac{CR^p}{h_k(x_k,t_k)^{p-2}} \to 0,$$
Harnack's Inequality (\ref{Harnack}) implies
\begin{equation}
\label{gamma}
h_k(x_k,t_k) \leq \gamma h_k(x,t_k+\theta_k)
\end{equation}
when  $x\in B(x_k,R)$ provided  $B(x_k,4R)\times (t_k-4\theta_k,t_k+ 4\theta_k)\Subset \Omega_T$.
The center is moving, but since $x_k \to x_0$, equation (\ref{gamma}) holds for sufficiently large indices. Let $\Lambda > 1$. We want to compare the solutions
$$\dfrac{\mathfrak{U^R}(x)}{\bigl(t-t_k+(\Lambda -1)\theta_k\bigr)^{\frac{1}{p-2}}}\quad \text{and}\quad h_k(x,t)$$
when $t=t_k+\theta_k$ and $x\in B(x_0,R)$. 
Here $\mathfrak{U^R}$ is the positive solution of the elliptic equation (\ref{elliptic}) in $B(x_0,R)$ with boundary values zero. We get
\begin{align*} 
&\dfrac{\mathfrak{U^R}(x)}{\bigl(t-t_k+(\Lambda -1)\theta_k\bigr)^{\frac{1}{p-2}}}\Bigg\vert_{t=t_k+\theta_k}
=\dfrac{\mathfrak{U^R}(x)}{(\Lambda CR^p)^\frac{1}{p-2}}h_k(x_k,t_k) \\
&\leq \dfrac{\mathfrak{U^R}(x)}{(\Lambda CR^p)^\frac{1}{p-2}}\gamma h_k(x,t_k+\theta_k)
\leq h_k(x,t_k+\theta_k)
\end{align*}
by taking $\Lambda$ so large that
$$\dfrac{\gamma \|\mathfrak{U^R}\|_{L^{\infty}(B(x_{0},R))}}{(\Lambda C R^p)^{\frac{1}{p-2}}}\leq  1. $$
By the Comparison Principle
$$
\dfrac{\mathfrak{U^R}(x)}{\bigl(t-t_k+(\Lambda -1)\theta_k\bigr)^{\frac{1}{p-2}}}\leq h_k(x,t)\leq h(x,t)$$
when $t \geq t_k+\theta_k$ and $x\in B(x_0,R)$. By letting $k \to \infty$, we arrive at
$$
\dfrac{\mathfrak{U^R}(x)}{\bigl(t-t_0\bigr)^{\frac{1}{p-2}}}\leq h(x,t)\quad\text{when}\quad t_0 <t < T.$$
Here $\mathfrak{U^R}$ depended on the ball $B(x_0,R)$, but now we have many more infinities, so that we may repeat the procedure in a suitable chain of balls to extend the estimate to  the whole domain $\Omega$.\qquad $\Box$

\begin{prop}
\label{increasing}
Suppose that we have an increasing sequence
$0 \leq h_1\leq h_2\leq h_3\leq\dots$ of $p$-caloric functions in $\Omega_T$ and denote $h = \lim_{k\to\infty}h_k$.
If the sequence $\{h_k\}$ is locally bounded, then
the limit function $h$ is $p$-caloric in  $\Omega_T$.
\end{prop}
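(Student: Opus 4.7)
\emph{Proof plan.} The plan is to show that the limit $h$ is both continuous and a weak solution, whence $p$-caloric.

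First, I would exploit local boundedness. Each $h_k$ is a non-negative bounded weak subsolution, so Caccioppoli's estimate (Lemma~\ref{Caccioppoli}) gives uniform bounds on $\nabla h_k$ in $L^p_{loc}(\Omega_T)$. Dominated convergence upgrades $h_k \nearrow h$ to strong $L^q_{loc}$-convergence for every finite $q$. Weak compactness then yields $\nabla h_k \rightharpoonup \nabla h$ in $L^p_{loc}$ along the whole sequence, while a subsequence satisfies $|\nabla h_k|^{p-2}\nabla h_k \rightharpoonup \chi$ weakly in $L^{p'}_{loc}$ for some vector field $\chi$. Passing to the limit in the weak formulation for $h_k$ produces
\begin{equation*}
\int_{\Omega_T}\Bigl(-h\,\frac{\partial\varphi}{\partial t} + \langle\chi,\nabla\varphi\rangle\Bigr)\dif x\dif t = 0\qquad\text{for all } \varphi\in C_0^\infty(\Omega_T).
\end{equation*}

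Second, continuity of $h$ follows from DiBenedetto's intrinsic H\"older regularity \cite{Db} for bounded weak solutions of the evolutionary $p$-Laplace equation: the local H\"older modulus of $h_k$ depends only on a local $L^\infty$ bound, which is uniform in $k$. The sequence $\{h_k\}$ is therefore locally equicontinuous, and Arzel\`a--Ascoli combined with the monotonicity yields locally uniform convergence $h_k \to h$; in particular, $h \in C(\Omega_T)$.

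The main obstacle is the identification $\chi = |\nabla h|^{p-2}\nabla h$. For this I would invoke Minty's monotonicity trick, powered by an energy identity. Testing the equation for $h_k$ with $\varphi = h_k\zeta^p$ (justified through Steklov averaging in the $t$-variable), for any non-negative $\zeta \in C_0^\infty(\Omega_T)$, and similarly testing the displayed equation above with $\varphi = h\zeta^p$, produces two identities whose right-hand sides involve only $h_k^2$, $h^2$, and $h_k$, $h$ paired with $\nabla\zeta$. Local uniform convergence of $h_k$ together with $|\nabla h_k|^{p-2}\nabla h_k \rightharpoonup \chi$ in $L^{p'}_{loc}$ lets these right-hand sides pass to the limit, giving
\begin{equation*}
\lim_{k\to\infty}\int_{\Omega_T}\zeta^p\langle|\nabla h_k|^{p-2}\nabla h_k,\nabla h_k\rangle\dif x\dif t = \int_{\Omega_T}\zeta^p\langle\chi,\nabla h\rangle\dif x\dif t.
\end{equation*}
Fed into the pointwise monotonicity inequality $\langle|\xi|^{p-2}\xi - |\eta|^{p-2}\eta,\xi-\eta\rangle\geq 0$, Minty's argument (test with $\eta = \nabla h + tw$, $w$ smooth, and let $t\to 0$) yields $\chi = |\nabla h|^{p-2}\nabla h$ almost everywhere. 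Thus $h$ is a weak solution, and together with the continuity established above, $h$ is $p$-caloric in $\Omega_T$.
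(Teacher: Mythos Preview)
Your argument is correct and complete; the Minty--Browder identification via the energy identity is a valid route, and the Steklov-average justification you indicate is standard for handling the time derivative when testing the limit equation with $h\zeta^p$.

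The paper takes a different shortcut. After establishing locally uniform convergence from the intrinsic H\"older estimate (exactly as you do), it invokes an external result (Theorem~24 in \cite{LM1}) stating that for locally uniformly convergent $p$-caloric functions the gradients $\{\nabla h_k\}$ form a Cauchy sequence in $L^{p-1}_{loc}(\Omega_T)$. Strong convergence of $\nabla h_k$ (hence pointwise a.e.\ along a subsequence) combined with the uniform $L^{p'}_{loc}$ bound on $|\nabla h_k|^{p-2}\nabla h_k$ then identifies the weak limit directly, bypassing the monotonicity trick. Your approach trades that black-box citation for a longer but fully self-contained argument: you need the energy identity $\lim_k\int\zeta^p|\nabla h_k|^p = \int\zeta^p\langle\chi,\nabla h\rangle$ and the Minty step, but you avoid appealing to a gradient-compactness theorem that itself requires nontrivial work. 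Either way the Caccioppoli estimate is used at the end to confirm $h\in L^p_{loc}(0,T;W^{1,p}_{loc}(\Omega))$.
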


\begin{proof} In a strict subdomain we have the H\"{o}lder continuity estimate
$$|h_k(x_1,t_1)-h_k(x_2,t_2)| \leq C \|h_k\|\left(|x_2-x_1|^{\alpha}+ |t_2-t_1|^{\frac{\alpha}{p}}\right)$$
so that the family is locally equicontinuous. Hence the convergence $h_k\to h$ is locally uniform in $\Omega_T$. 
Theorem 24 in [LM] implies that $\{\nabla h_k\}$ is a Cauchy sequence in $L^{p-1}_{loc}(\Omega_T)$. 
Thus we can pass to the limit under the integral sign in the equation

\begin{equation*}
 \int_0^T\int_{\Omega}\left(-h_k\frac{\partial \varphi}{\partial t} + \langle|\nabla h_k|^{p-2}\nabla h_k, \nabla \varphi\rangle\right) \dif x \dif t =0
\end{equation*}
as $k \to \infty$. From the Caccioppoli estimate
\begin{align*}
&\int_{t_1}^{t_2}\int_{\Omega}\zeta^p|\nabla h_k|^p\dif x \dif t\\&\leq C(p)\int_{t_1}^{t_2}\int_{\Omega}h_k^p|\nabla \zeta|^p\dif x \dif t +C(p)\int_{\Omega}\zeta(x)^ph_k(x,t)^2\Big\vert_{t_1}^{t_2}\dif x
\end{align*}
we deduce that $h \in L^p_{loc}(0,T;W^{1,p}_{loc}(\Omega))$. 
\end{proof}

\section{Proof of the Theorem}

For the proof we start with a non-negative $p$-supercaloric function $v$ defined in $\Omega_T$.
By the device in the beginning of Section \ref{ASep}, we  fix a small $\delta > 0$ and redefine $v$ so that  $v(x,t) \equiv 0$ when $t\leq \delta$. This function is $p$-supercaloric. This does not affect the statement of the theorem. The initial condition $v(x,0) = 0$ required in Theorem \ref{KL} is now in order. 

Let $Q_{2l} \subset \subset \Omega$ be a cube with side length $4l$ and consider the concentric cube
$$Q_{l} = \{x\big\vert|x_i-x^0_i| < l, i = 1,2,\dots n\}$$
of side length $2l$. The center is at $x^0$. The main difficulty is that $v$ is not zero on the lateral boundary, neither does $v_j$ obey Theorem \ref{KL}.  We aim at correcting $v$ outside $Q_{l} \times (0,T)$ so that also the new function  is $p$-supercaloric and, in addition,  satisfies the requirements of zero boundary values in Theorem \ref{KL}. Thus we study the function
\begin{equation}
\label{noll}
w = 
\begin{cases}
v \quad \text{in}\quad Q_l \times (0,T),\\
h\quad\text{in}\quad (Q_{2l} \setminus Q_l) \times (0,T),
\end{cases}
\end{equation}
where the function $h$ is, in the outer region,   the weak solution to the boundary value problem
\begin{equation}
\label{ringh}
\begin{cases} h = 0 \quad \text{on}\quad \partial Q_{2l} \times (0,T),\\
h = v \quad \text{on}\quad \partial Q_{l} \times (0,T),\\
h = 0 \quad \text{on}\quad ( Q_{2l} \setminus  Q_{l}) \times \{0\}.
\end{cases}
\end{equation}
An essential observation is that the solution $h$ does not always exist.
This counts for the dichotomy. If it exists, the truncations $w_j$ satisfy the assumptions in Theorem \ref{KL}, as we shall see.

For the construction we use the infimal convolutions
$$v^{\varepsilon}(x,t) = \inf_{(y,\tau)\in\Omega_T}\Bigl\{v(y)+\frac{1}{2\varepsilon}(|x-y|^2+|t-\tau|^2)\Bigr\}.$$
They  are Lipschitz continuous in $\overline{Q_{2l}}\times [0,T]$ and  weak supersolutions when $\varepsilon$ is small enough. Then we define the solution $h^{\varepsilon}$ as in formula (\ref{ringh}) above, but with $v^{\varepsilon}$ in place of $v$. Then we define 
\begin{equation*}
w^{\varepsilon} =
\begin{cases}
v^{\varepsilon} \quad \text{in}\quad Q_l \times (0,T),\\
h^{\varepsilon} \quad\text{in}\quad (Q_{2l} \setminus Q_l) \times (0,T),
\end{cases}
\end{equation*}
and $w^{\varepsilon}(x,0)= 0$ in $\Omega$. Now $h^{\varepsilon} \leq v^{\varepsilon}$, and when $t \leq \delta$ we have $ 0 \leq h^{\varepsilon} \leq v^{\varepsilon} = 0$ so that  $h^{\varepsilon}(x,t) = 0 $ when $t \leq \delta$. The function $w^{\varepsilon}$ satisfies the comparison principle and is therefore a $p$-supercaloric function. Here it is essential that $h^{\varepsilon} \leq v^{\varepsilon}$. The function  $w^{\varepsilon}$ is also (locally) bounded; thus we have arrived at the conclusion that  $w^{\varepsilon}$ is a weak supersolution in $Q_{2l} \times (0,T)$.

There are two possibilities, depending on whether the sequence $\{h^{\varepsilon}\}$ is bounded or not, when $\varepsilon \searrow 0$ through a sequence of values.

\textsf{Bounded case.} Assume that there does not exist any sequence of points $(x_{\varepsilon},t_{\varepsilon}) \to (x_0,t_0)$ such that
$$ \lim_{\varepsilon \to 0}h^{\varepsilon}(x_{\varepsilon},t_{\varepsilon})=\infty,$$
where $x_0 \in Q_{2l} \setminus \overline{Q_{l}}$ and $0  < t_0 < T$ (that is an \emph{interior} limit point). By Proposition \ref{increasing}, 
the limit function $h =  \lim_{\varepsilon \to 0}h^{\varepsilon}$
is $p$-caloric in its domain. The function $w = \lim_{\varepsilon\to0} w^{\varepsilon}$ itself is $p$-supercaloric and agrees with formula (\ref{noll}). 

By Theorem \ref{annali} the truncated functions $w_j = \min\{w(x,t),j\}$, $j = 1,2,\dots$,
are weak supersolutions in $Q_{2l} \times (0,T)$. We claim that
$$w_j \in L^p(0,T';W^{1,p}_0(Q_{2l}))\quad\text{when}\quad T' < T.$$ This requires an estimation, where we use
$$L = \sup \{h(x,t):(x,t)\in (Q_{2l}\setminus Q_{5l/4})\times (0,T')\}.$$
Let $\zeta = \zeta(x)$ be a smooth cutoff function such that $0 \leq \zeta \leq 1$, $\zeta = 1$ in $Q_{2l} \setminus Q_{3l/2}$ and 
$\zeta = 0$ in $Q_{5l/4}$. Using the test function 
$\zeta^ph$ when deriving the Caccioppoli estimate we get
\begin{align*}
&\int_{0}^{T'}\int_{Q_{2l}\setminus Q_{3l/2}}|\nabla w_j|^p \dif x \dif t\\ &\leq
\int_{0}^{T'}\int_{Q_{2l}\setminus Q_{3l/2}}|\nabla h|^p \dif x \dif t\leq 
\int_{0}^{T'}\int_{Q_{2l}\setminus Q_{5l/4}}\zeta^p|\nabla h|^p \dif x \dif t \\ &\leq
C(p)\left\{ \int_{0}^{T'}\int_{Q_{2l}\setminus Q_{l}}h^p|\nabla \zeta|^p \dif x \dif t +
\int_{Q_{2l}\setminus Q_{5l/4}}h(x,T')^2\dif x \right\}\\ &\leq C(n,p)\bigl(L^pl^{n-p}T + L^2l^n\Bigr),
\end{align*}
where we used the fact that
$|\nabla w_j| = |\nabla \min\{h,j\}| \leq |\nabla h|$ in the outer region.
Thus we have an estimate over the outer region $Q_{2l}\setminus Q_{3l/2}$. Concerning the inner region $Q_{3l/2}$, 
we first choose a smooth cutoff function $\eta = \eta(x,t)$ such that $0 \leq \eta \leq 1$, $\eta \equiv 1$ in $Q_{3l/2}$ 
and $\eta =0$ in $Q_{2l} \setminus Q_{9l/4}$. 
Then the Caccioppoli estimate for the truncated functions $w_j$, $j=1,2,\dots$,
takes the form
\begin{align*}
&\int_{0}^{T'}\int_{Q_{3l/2}}|\nabla w_j|^p\dif x \dif t \leq \int_{0}^{T'}\int_{Q_{2l}}\eta^p|\nabla w_j|^p\dif x \dif t \\
&\leq Cj^p\int_{0}^{T'}\int_{Q_{2l}}|\nabla \eta|^p\dif x \dif t + Cj^p\int_{0}^{T'}\int_{Q_{2l}}|\eta_t|^p\dif x \dif t.
\end{align*}

Thus we have obtained the estimate 
$$\int_{0}^{T'}\int_{Q_{2l}}|\nabla w_j|^p\dif x \dif t \leq Cj^p$$
over the whole domain $Q_{2l} \times (0,T')$
and it follows that $w_j \in L(0,T';W^{1,p}_0(Q_{2l}))$. In particular, the crucial estimate
$$\int_{0}^{T'}\int_{Q_{2l}}|\nabla w_1|^p\dif x \dif t < \infty,$$
which was taken for granted in \cite{KL2}, is now established.\footnote{The class $\mathfrak{M}$ passed unnoticed in \cite{KL2}.}

From  Theorem \ref{KL} we conclude that $v \in L^q(Q_{l})$ and $\nabla v \in L^{q'}(Q_{l})$ with the correct summability exponents. Either we can proceed like this for all interor cubes, or the following case occurs.

{\sf Unbounded case}. If there is a sequence $(x_{\varepsilon},t_{\varepsilon}) \rightarrow (x_0,t_0)$ such that
$$ \lim_{\varepsilon \to 0}h^{\varepsilon}(x_{\varepsilon},t_{\varepsilon}) = \infty$$
for some $x_0 \in Q_{2l}\setminus \overline{Q_{l}}$, $0 < t_0 < T$, then
$$v(x,t) \geq h(x,t) \geq (t-t_0)^{-\frac{1}{p-2}}\mathfrak{U}(x),$$
when $t > t_0$,
according to Proposition \ref{blowup}. Thus $v(x,t_0+)=\infty$
in $Q_{2l}\setminus \overline{Q_{l}}$. But in this construction we can replace the outer cube with $\Omega$, that is, a new $h$ is defined in $\Omega \setminus \overline{Q_l}$. Then by comparison
$$v \geq   h^{\Omega} \geq   h^{Q_{2l}}$$
and so $v(x,t_0+) = \infty$ in the whole boundary zone $\Omega \setminus  \overline{Q_l}$.

It remains to include the inner cube $Q_{l}$ in the argument. This is easy. Reflect $h=h^{Q_{2l}}$
in the plane  $x_1 = x_1^0 + l$, which contains one side of the small cube by setting
$$h^*(x_1,x_2.\dots,x_n) = h(2x^0_1+2l-x_1,x_2,\dots,x_n),$$
so that
$$\dfrac{x_1 + \bigl(2(x^0_1+l)-x_1\bigr)}{2} = x_1^0 + l$$
as it should. Recall that $x^0$ was the center of the cube. (The same can be done earlier for all the $h^{\varepsilon}$.) 
The reflected function $h^*$ is $p$-caloric. Clearly, $v\geq h^*$ by comparison. This forces $v(x,t_0+) = 0$ when
$x \in Q_{l}$, $x_1 > x_1^0$. A similar reflexion in the plane $x_1 = x_1^0 - l$ includes the other half  $x_1 < x_1^0$.
We have achieved that $v(x,t_0+) = \infty$ also in the inner cube $Q_l$.
This proves that 
$$v(x,t_0+)\equiv\infty\quad \text{in the whole}\quad \Omega.$$

\section{The Porous Medium Equation}

We consider the Porous Medium Equation 
$$\frac{\partial u}{\partial t}-\Delta(u^m)=0$$
in the \textsf{slow diffusion case} $m>1$. The equation is reated in detail in the book \cite{V}. We also mention \cite{WZY} and \cite{SGK}.  In \cite{KL3} the so-called\footnote{The label ''viscosity'' was dubbed in order to distinguish them  and has little to do with viscosity. The name ''$m$-superporous function'' would perhaps do instead?} \emph{viscosity supersolutions} of the Porous Medium Equation were defined in an analoguous way as the $p$-supercaloric functions. Thus they are  lower semicontinuous functions $v: \Omega_T \to [0,\infty]$, finite in a dense subset, obeying the Comparison Principle with respect to the solutions of the equation. 

Again we get two totally distinct classes of solutions, called class $\mathfrak{B}$ and $\mathfrak{M}$.
Now the discriminating summability exponent is $m-1$. We begin with $\mathfrak{B}$.

\begin{thm}[Class $\mathfrak{B}$]
\label{porousB} Let $m>1$. For a viscosity supersolution  $v:\Omega_T \rightarrow [0,\infty]$ the following conditions are equivalent:
\begin{itemize}
\item[\rm{(i)}] $v\in L^{m-1}_{loc}(\Omega_T)$,
\item[\rm{(ii)}] the Sobolev gradient $\nabla (v^{m-1})$ exists and $\nabla (v^{m-1}) \in  L^{q'}_{loc}(\Omega_T)$ whenever $q' < 1+\frac{1}{1+nm}$,
\item[\rm{(iii)}] $v \in  L^{q}_{loc}(\Omega_T)$ whenever $q < m+\frac{2}{n}$.
\end{itemize}
\end{thm}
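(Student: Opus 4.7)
The plan is to transcribe the $p$-Laplace dichotomy proof above to the Porous Medium setting. Three prerequisites must be prepared first: a PME separable minorant of type $\mathfrak{V}$, a Harnack convergence proposition, and the PME analogue of Theorem \ref{KL} giving the sharp summability exponents $m+2/n$ and $1+1/(1+nm)$ for viscosity supersolutions vanishing on the parabolic boundary. Granted these, the equivalences reduce to a local cube-correction / infimal-convolution argument identical to the one carried out for the $p$-Laplacian. The implication (iii)$\Rightarrow$(i) is immediate because $m-1<m+2/n$, and (ii)$\Rightarrow$(i) follows from a Sobolev embedding for $v^{m-1}$ together with the lower semicontinuity of $v$; the content of the theorem is (i)$\Rightarrow$(ii),(iii).

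For the separable minorant, the ansatz $v(x,t)=(t-t_{0})^{-\alpha}\mathfrak{U}(x)$ forces $\alpha=\tfrac{1}{m-1}$ and reduces the PME to the semilinear elliptic problem
\begin{equation*}
\Delta\bigl(\mathfrak{U}^{m}\bigr)+\tfrac{1}{m-1}\mathfrak{U}=0\quad\text{in }\Omega,\qquad \mathfrak{U}=0\ \text{on }\partial\Omega,
\end{equation*}
the PME counterpart of (\ref{elliptic}). A positive continuous solution is obtained by minimizing a Rayleigh-type quotient such as $\int_{\Omega}|\nabla(w^{m})|^{2}\dif x$ over $w\geq0$ with $\int_{\Omega}w^{m+1}\dif x=1$ and then normalizing; positivity and interior regularity follow from the weak Harnack inequality for the associated sublinear elliptic equation. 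Extending by zero for $t\leq t_{0}$ gives the PME separable supersolution $\mathfrak{V}$. Using the intrinsic Harnack inequality of DiBenedetto--Gianazza--Vespri with waiting time $\theta=CR^{2}/u(x_{0},t_{0})^{m-1}$ in place of (\ref{Harnack}) and comparing a blowing-up sequence $h_{k}(x_{k},t_{k})\to\infty$ with a translated $\mathfrak{V}$, the proof of Proposition \ref{blowup} transfers verbatim and yields the lower bound $h(x,t)\geq\mathfrak{U}(x)(t-t_{0})^{-1/(m-1)}$. The bounded-sequence counterpart of Proposition \ref{increasing} transfers similarly via the standard H\"older estimates for non-negative PME solutions and the PME Caccioppoli inequality on $v^{(m+1)/2}$.

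With these tools in place, I would fix concentric cubes $Q_{l}\subset Q_{2l}\Subset\Omega$, redefine $v\equiv0$ for $t\leq\delta$, form the infimal convolutions $v^{\varepsilon}$, solve the Dirichlet problem for the PME in the outer ring $(Q_{2l}\setminus\overline{Q_{l}})\times(0,T)$ with data $v^{\varepsilon}$ on $\partial Q_{l}\times(0,T)$, zero on $\partial Q_{2l}\times(0,T)$ and zero at $t=0$ to obtain $h^{\varepsilon}$, and glue as in (\ref{noll}) to produce a bounded viscosity supersolution $w^{\varepsilon}$ on $Q_{2l}\times(0,T)$. The dichotomy is whether $h^{\varepsilon}$ admits an interior blow-up point $(x_{0},t_{0})$ as $\varepsilon\searrow0$. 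In the \emph{bounded case}, the PME version of Proposition \ref{increasing} produces a limit PME solution $h$ in the outer ring and hence an $L^{\infty}$-bound $L$ on $h$ away from $\partial Q_{l}$; the PME Caccioppoli estimate in $Q_{2l}\setminus Q_{3l/2}$, combined with the Caccioppoli estimate for the truncations $w_{j}$ in the inner region, yields the energy bound required to invoke the PME version of Theorem \ref{KL}, from which (iii) and (ii) over $Q_{l}$ can be read off; exhausting $\Omega_{T}$ by such cubes yields the global statement. In the \emph{unbounded case}, the Harnack convergence proposition yields $v(x,t)\geq\mathfrak{U}(x)(t-t_{0})^{-1/(m-1)}$ in $Q_{2l}\setminus\overline{Q_{l}}$; enlarging the outer cube to $\Omega$ and reflecting across the two faces $x_{1}=x_{1}^{0}\pm l$ of the inner cube, as in the preceding proof, propagates the blow-up to the inner cube, so $v\in\mathfrak{M}$ and (i) fails.

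The main obstacle I anticipate is the Caccioppoli step in the bounded case. For the Porous Medium Equation the natural local energy quantity is $\nabla(v^{(m+1)/2})$ rather than $\nabla v$, and passing from an energy bound on that quantity to the sharp Sobolev integrability of $\nabla(v^{m-1})$ with exponent $q'<1+1/(1+nm)$ requires the full Kinnunen--Lindqvist machinery of \cite{KL3} rather than a formal transcription of the $p$-Laplace proof; the auxiliary verifications that the Dirichlet solutions $h^{\varepsilon}$ are sufficiently regular to invoke the intrinsic Harnack inequality, and that the infimal convolution preserves the PME viscosity supersolution property, are routine for the $p$-Laplace equation but must be checked separately here.
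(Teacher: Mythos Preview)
Your outline is essentially the paper's own: the same cube--correction dichotomy, the same separable minorant (the paper calls it the ``Friendly Giant'' $\mathfrak{G}(x)/(t-t_0)^{1/(m-1)}$ satisfying $\Delta(\mathfrak{G}^m)+\tfrac{1}{m-1}\mathfrak{G}=0$), the same intrinsic Harnack comparison to obtain Proposition~\ref{blowupmedium}, and the same appeal to the PME analogue of Theorem~\ref{KL} (recorded as Theorem~\ref{KLm}, cited from \cite{KL3}).

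There is one genuine technical gap. You carry the infimal convolution $v^{\varepsilon}$ over from the $p$-Laplace proof and list ``that the infimal convolution preserves the PME viscosity supersolution property'' as something to be checked separately. The paper singles out exactly this point as the place where the argument must change: ``the infimal convolution should be replaced by the solution to an obstacle problem as in Chapter~5 of \cite{KL3}.'' The reason is concrete. Writing
\[
v^{\varepsilon}(x,t)=\inf_{(z,s)}\Bigl\{v(x-z,t-s)+\tfrac{1}{2\varepsilon}\bigl(|z|^{2}+s^{2}\bigr)\Bigr\},
\]
one sees $v^{\varepsilon}$ is an infimum of translates of $v$ shifted by \emph{constants}. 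For the evolutionary $p$-Laplacian, $u\mapsto u+c$ preserves supersolutions, so each competitor is $p$-supercaloric and the infimum inherits the property. For the Porous Medium Equation this fails: $\partial_t(u+c)-\Delta\bigl((u+c)^m\bigr)$ bears no useful relation to $\partial_t u-\Delta(u^m)$, so there is no reason for $v^{\varepsilon}$ to be a viscosity supersolution. The obstacle-problem approximants of \cite{KL3} supply the needed bounded supersolutions increasing to $v$ without relying on additive invariance. With that substitution your plan goes through as written.
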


A typical member of this class is the Barenblatt solution for the Porous Medium Equation. In this case a viscosity supersolution is a 
solution to a corresponding measure data problem with a Radon measure in a similar fashion as for the Evolutionary $p$-Laplace Equation.
The other class of viscosity supersolutions is $\mathfrak{M}$. Unfortunately, this class was overlooked in \cite{KL3}.

\begin{thm}[Class $\mathfrak{M}$]  Let $m>1$. For a viscosity supersolution  $v:\Omega_T \rightarrow [0,\infty]$ the following conditions are equivalent:
\begin{itemize}
\item[\rm{(i)}]  $v\not \in L^{m-1}_{loc}(\Omega_T)$,
\item[\rm{(ii)}]  there is a time $t_0$, $0<t_0<T$, such that
$$\liminf_{\substack{(y,t)\to(x,t_0)\\t>t_0}}v(y,t)(t-t_0)^{\frac{1}{m-1}}>0
\quad\text{for all}\quad x\in \Omega.$$
\end{itemize}
\end{thm}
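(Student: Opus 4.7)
The plan is to mirror the dichotomy argument of Section 5, replacing the $p$-Laplace ingredients by their porous medium counterparts. The analogous separable minorant is $\mathfrak{V}(x,t)=\mathfrak{U}(x)(t-t_{0})^{-1/(m-1)}$ for $t>t_{0}$, where $\mathfrak{U}$ is a positive continuous solution of the elliptic equation $\Delta(\mathfrak{U}^{m})+\tfrac{1}{m-1}\mathfrak{U}=0$ with zero boundary data; this $\mathfrak{U}$ is produced by minimizing an appropriate Rayleigh quotient exactly as in Section \ref{ASep} and is strictly positive by the elliptic Harnack inequality. The intrinsic Harnack inequality for the PME comes with waiting time $\theta=CR^{2}/u(x_{0},t_{0})^{m-1}$.

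For the easy direction (ii) $\Rightarrow$ (i), for each $x\in\Omega$ the liminf hypothesis furnishes a neighborhood of $(x,t_{0})$ and a constant $c_{x}>0$ on which $v(y,t)(t-t_{0})^{1/(m-1)}\geq c_{x}$ for $t>t_{0}$. Covering a compact $K\Subset\Omega$ by finitely many such neighborhoods yields uniform constants $c,\tau>0$ with
$$v(y,t)\geq c\,(t-t_{0})^{-1/(m-1)}\quad\text{on}\quad K\times(t_{0},t_{0}+\tau).$$
Raising to the power $m-1$ and integrating in $t$ gives a logarithmic divergence, so $v\notin L^{m-1}_{loc}(\Omega_{T})$.

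For the decisive implication (i) $\Rightarrow$ (ii), extend $v$ by zero to the past, fix concentric cubes $Q_{l}\subset Q_{2l}\Subset\Omega$, form the infimal convolutions $v^{\varepsilon}$ (Lipschitz viscosity supersolutions for small $\varepsilon$), and let $h^{\varepsilon}$ be the weak PME solution in $(Q_{2l}\setminus\overline{Q_{l}})\times(0,T)$ with boundary data $v^{\varepsilon}$ on $\partial Q_{l}\times(0,T)$ and zero on the rest of the parabolic boundary. Glue
$$w^{\varepsilon}=\begin{cases}v^{\varepsilon}&\text{in }Q_{l}\times(0,T),\\ h^{\varepsilon}&\text{in }(Q_{2l}\setminus Q_{l})\times(0,T),\end{cases}$$
and dichotomize on whether $\{h^{\varepsilon}\}$ stays locally bounded as $\varepsilon\searrow 0$. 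In the bounded case, the PME analog of Proposition \ref{increasing} gives an $m$-caloric limit, Caccioppoli estimates place the truncations of $w=\lim w^{\varepsilon}$ in the right Sobolev space with zero lateral data, and the PME version of Theorem \ref{KL} (underpinning Theorem \ref{porousB}(iii)) forces $v\in L^{m-1}_{loc}(Q_{l})$; exhausting $\Omega_{T}$ by such cubes then contradicts (i). In the unbounded case, one has $(x_{\varepsilon},t_{\varepsilon})\to(x_{0},t_{0})$ with $h^{\varepsilon}(x_{\varepsilon},t_{\varepsilon})\to\infty$ at an interior point, and the PME version of Proposition \ref{blowup} delivers, in a small ball $B(x_{0},R)$, the lower bound $h(x,t)\geq \mathfrak{U}^{R}(x)(t-t_{0})^{-1/(m-1)}$ for $t_{0}<t<T$. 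Comparison gives $v\geq h$; replacing the outer cube by successively larger subdomains (ultimately $\Omega$) and chaining balls spreads the minorant over $\Omega\setminus\overline{Q_{l}}$, and the reflection trick across the faces of $Q_{l}$ from the last step of Section 5 carries it into $Q_{l}$ as well, yielding (ii) uniformly on $\Omega$.

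The main obstacle is the PME version of Proposition \ref{blowup}. One must repeat the Harnack-plus-comparison argument, now comparing the approximating $m$-caloric sequence with $\mathfrak{U}^{R}(x)\bigl(t-t_{k}+(\Lambda-1)\theta_{k}\bigr)^{-1/(m-1)}$ and choosing $\Lambda$ so large that
$$\frac{\gamma\,\|\mathfrak{U}^{R}\|_{L^{\infty}(B(x_{0},R))}}{(\Lambda CR^{2})^{1/(m-1)}}\leq 1,$$
where $\theta_{k}=CR^{2}/h_{k}(x_{k},t_{k})^{m-1}$. The formal substitutions $p-2\leftrightarrow m-1$ and $R^{p}\leftrightarrow R^{2}$ go through once the correct intrinsic Harnack inequality for the PME is available; all remaining ingredients, including the infimal convolution machinery and the reflection step, transfer essentially verbatim from the $p$-Laplace case.
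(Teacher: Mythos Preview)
Your proposal is correct and follows essentially the same route the paper sketches: it explicitly says the PME argument ``proceeds along the same lines as for the $p$-parabolic equation,'' with the dichotomy, the intrinsic Harnack inequality (waiting time $\theta=CR^{2}/u^{m-1}$), the Friendly Giant minorant $\mathfrak{U}(x)(t-t_0)^{-1/(m-1)}$, and the reflection trick all carried over as you describe.

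The one place you diverge is the approximation device. You use infimal convolutions $v^{\varepsilon}$, while the paper flags that for the Porous Medium Equation ``the infimal convolution should be replaced by the solution to an obstacle problem as in Chapter~5 of \cite{KL3}.'' The point is not that infimal convolutions are wrong in principle---the equation is translation-invariant and the comparison-principle argument should survive---but that the needed statement (infimal convolutions of PME viscosity supersolutions remain viscosity supersolutions in subdomains, and are weak supersolutions) is not proved in the cited references for this equation, whereas the obstacle-problem approximation is developed in full in \cite{KL3}. If you substitute obstacle solutions for your $v^{\varepsilon}$, everything else in your outline goes through unchanged; if you insist on infimal convolutions, you owe a short justification of that step for the PME.
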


Notice that again the infinities occupy the whole space at some instant $t_0$. 
In \cite{KL3} Theorem 3.2 it was established that a \emph{bounded} viscosity supersolution $v$  is a weak supersolution to the equation: $v^m \in L^2_{loc}(0,t;W^{1,2}_{loc}(\Omega))$ and
$$\int_0^T\int_{\Omega}\left( - v\frac{\partial\phi}{\partial t}+\langle \nabla v^m,\nabla \varphi\rangle\right)\dif x \dif t \geq 0$$
whenever $\varphi \in C^{\infty}_0(\Omega_T)$ and $\varphi \geq 0$. 

We shall deduce the above theorems from the following result.

\begin {thm}
\label{KLm}
 Let $m> 1$. Suppose that $v \geq 0$ is a viscosity supersolution in $\Omega_T$ with initial values $v(x,0) = 0$ in $\Omega$. If 
$$\min\{v^m,j\} \in L^2(0,T;W^{1,2}_0(\Omega)),\qquad j = 1,2,\dots,$$
then 
\begin{itemize}
\item[\rm{(i)}] $v\in L^q(\Omega_{T_1})$ whenever $q < 1+\frac{2}{n}$ and $T_1 < T$,
\item[\rm{(ii)}] the function $v^m$ has a Sobolev gradient $\nabla (v^m)  \in L^{q'}(\Omega_{T_1})$ whenever $q' < 1+\frac{1}{1+mn}$ and $T_1 < T$.
\end{itemize}
The summability exponents are sharp.
\end{thm}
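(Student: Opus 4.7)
The plan is to mirror Theorem \ref{KL} with $\nabla v^m$ playing the role of the $p$-Laplacian flux $|\nabla v|^{p-2}\nabla v$. The hypothesis $\min\{v^m,j\}\in L^2(0,T;W^{1,2}_0(\Omega))$ together with $v(\cdot,0)\equiv 0$ means that the truncations $v_j:=\min\{v,j^{1/m}\}$ are bounded viscosity supersolutions whose $m$th powers vanish on the parabolic boundary; Theorem 3.2 of \cite{KL3} upgrades them to weak supersolutions, so the PME energy identity is available for $v_j$.

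First I would test the weak supersolution inequality for $v_j$ against $\zeta^2 v_j^m$, where $\zeta=\zeta(x,t)$ is a smooth compactly supported cutoff, and derive in a standard way the PME Caccioppoli estimate
\[
\iint \zeta^2 |\nabla v_j^m|^2 \dif x\dif t + \esssup_{t}\int \zeta^2 v_j^{m+1} \dif x \leq C(m)\iint \bigl(|\nabla\zeta|^2 v_j^{2m} + \zeta|\partial_t \zeta| v_j^{m+1}\bigr) \dif x\dif t.
\]
The zero initial condition kills the contribution at $t=0$. Interpolating the $\esssup$-in-time bound on $\int \zeta^2 v_j^{m+1}\dif x$ against the spatial $L^2$ gradient bound by means of the parabolic Sobolev inequality applied to $\zeta v_j^m$ (with exponent $r=(m+1)/m$) then yields a quantitative gain in space-time integrability.

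Iterating this Caccioppoli--plus--Sobolev improvement on a nested family of shrinking subcylinders --- a De Giorgi--DiBenedetto scheme --- produces uniform-in-$j$ bounds $v_j\in L^q_{\mathrm{loc}}(\Omega_{T_1})$ for every $q$ strictly below the stated sharp threshold, and monotone convergence transfers the bound to $v$, establishing (i). Once (i) is in hand, (ii) follows by a H\"older interpolation that splits $|\nabla v_j^m|^{q'}$ into one factor controlled by the $L^2$ gradient bound from the Caccioppoli estimate and a second factor which is a power of $v_j$ whose integrability is furnished by (i); the exponent $1+\tfrac{1}{1+mn}$ emerges from the resulting scaling balance, and matches exactly the scaling saturated by the Barenblatt profile for the Porous Medium Equation.

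The main obstacle is the standard but delicate bookkeeping of the iteration: the sharp exponents are reached only in the limit, so one must orchestrate a geometric sequence of nested cylinders with vanishing cutoff losses, keep track of the accumulated constants, and verify that every uniform-in-$j$ estimate survives the monotone passage $j\to\infty$. This is precisely the technical core already carried out in \cite{KL1} for the $p$-Laplace case; no conceptual novelty arises beyond substituting the PME energy for the $p$-parabolic one. Sharpness of both exponents is witnessed by the Barenblatt solution, as in the remark following Theorem \ref{porousB}.
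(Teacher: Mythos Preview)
The paper does not actually prove this theorem: its entire proof is the single line ``See \cite{KL3}, Theorem 4.7 and 4.8.'' Your sketch is a reasonable outline of the Moser-type iteration carried out in that reference, so in spirit you are aligned with what the paper defers to.

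One point worth sharpening: the hypothesis $\min\{v^m,j\}\in L^2(0,T;W^{1,2}_0(\Omega))$ is not merely a qualitative smoothness assumption but is what makes the estimate \emph{global} over $\Omega$. In \cite{KL3} (and in the $p$-Laplace precursor \cite{KL}) the iteration is not run on a nested family of shrinking spatial cubes with cutoffs, as your ``De Giorgi--DiBenedetto scheme'' phrasing suggests; rather, the zero lateral boundary values permit the Sobolev inequality to be applied on all of $\Omega$ at each step, and the iteration is in the exponent of $v$ (testing against negative or fractional powers of $1+v_j$), with time-cutoffs only. Your version with spatial cutoffs would still yield local $L^q$ bounds, but the theorem as stated asserts $v\in L^q(\Omega_{T_1})$ on the full domain, and for that the $W^{1,2}_0$ boundary condition is the mechanism. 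This is a bookkeeping adjustment rather than a conceptual gap.
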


\begin{proof} See \cite{KL3}, Theorem 4.7 and 4.8.\end{proof}

We start from the intrinsic Harnack inequality given in \cite[Theorem 3]{D}. This is the fundamental analytic tool here.

\begin{lemma}[Harnack's inequalty] Let $m>1$. There are constants $C$ and $\gamma$, depending only on $n$ and $m$, such that if $u > 0$ is a continuos weak solution in $$B(x_0,4R)\times (t_0-4\theta,t_0+4\theta),\quad\text{where}\quad \theta = \frac{CR^2}{u(x_0,t_0)^{m-1}},$$ 
then the inequality
\begin{equation}
\label{Harnackmedium}
u(x_0,t_0) \leq \gamma \inf_{B_R(x_0)}u(x,t_0+\theta)
\end{equation}
is valid.
\end{lemma}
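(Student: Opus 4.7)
The plan is to prove this intrinsic Harnack inequality via DiBenedetto's standard scheme of intrinsic scaling combined with expansion of positivity, which is the canonical route for degenerate parabolic equations. Set $M := u(x_0,t_0)$. Rescaling by
$$v(y,s) = \frac{1}{M}\, u\bigl(x_0 + Ry,\; t_0 + (R^2/M^{m-1})s\bigr)$$
reduces matters to showing that any nonnegative continuous weak solution $v$ of the PME $v_s = \Delta(v^m)$ on $B_4 \times (-a,a)$ (for a universal $a$) with $v(0,0)=1$ satisfies $v(\cdot, s_0) \geq 1/\gamma$ throughout $B_1$ for some fixed $s_0 \in (0,a)$ and universal $\gamma = \gamma(n,m)$. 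All of the intrinsic waiting-time structure in the statement is then absorbed into this normalized geometry.

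The key analytic ingredient is an expansion-of-positivity lemma for the PME: if
$$\bigl|\{y \in B_\rho : v(y,\tau)\geq\sigma\}\bigr| \geq \alpha\,|B_\rho|,$$
then there exist $\eta,\kappa\in(0,1)$ depending only on $n,m,\alpha$ such that
$$v\bigl(y,\,\tau+\kappa\rho^2\sigma^{1-m}\bigr) \geq \eta\sigma \quad \text{for every } y\in B_{2\rho}.$$
This is established by a Caccioppoli estimate applied to the truncation $(v^m-k)_-$, followed by a logarithmic estimate that carries the positivity forward in time without letting it collapse near the degenerate level $v=0$, and finally a De Giorgi iteration on the resulting energy inequality. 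Continuity of $v$ together with $v(0,0)=1$ immediately supplies an initial small cylinder on which $v\geq \tfrac{1}{2}$, providing the measure-theoretic positivity needed to start the iteration.

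One then chains the expansion-of-positivity lemma a bounded number of times (the number being controlled by $n$ and $m$) to spread positivity from a small cylinder around $(0,0)$ out to all of $B_1$ at time $s_0$, losing only a universal factor at each step. Rescaling back to the original variables converts the bound $v(\cdot,s_0)\geq 1/\gamma$ on $B_1$ into $u(\cdot, t_0+\theta)\geq M/\gamma$ on $B_R(x_0)$, which is the stated inequality with $\theta = CR^2/u(x_0,t_0)^{m-1}$.

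The main obstacle is the expansion-of-positivity step itself. Because the PME degenerates at $u=0$, both the waiting time and the gain $\eta\sigma$ in positivity depend on the current magnitude of $v$, so the intrinsic scale must be tracked and updated at each iteration rather than fixed \emph{a priori}; naive parabolic comparison arguments fail for precisely this reason. The logarithmic time-propagation estimate and the De Giorgi iteration in this intrinsic geometry are technically delicate, and the complete argument is carried out in \cite{D}.
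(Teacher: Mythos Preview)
Your sketch is correct and follows the standard DiBenedetto route via intrinsic scaling and expansion of positivity, ultimately deferring the technical details to \cite{D}. The paper does exactly the same thing, only more tersely: it states the lemma and cites \cite[Theorem~3]{D} without any further argument, so your proposal is entirely in line with (indeed more detailed than) the paper's own treatment.
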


Again the waiting time $\theta$ depends on the solution itself. Then we need the separable solution
$$ \dfrac{\mathfrak{G}(x)}{(t-t_0)^{\frac{1}{m-1}}}$$
where the function $\mathfrak{G}^m\in W^{1,2}_0(\Omega)$ is a weak solution of the auxiliary equation
$$\Delta(\mathfrak{G}^m) +\frac{\mathfrak{G}}{m-1}=0,$$
which is the Euler-Lagrange Equation of the variational integral
$$\dfrac{\int_{\Omega}|\nabla(u^m)|^2\dif x}{\int_{\Omega} |u|^{m+1}\dif x}.$$
This function is known as\ \lq\lq the Friendly Giant\rq\rq, see [V, p. 111] and often serves as a minorant. When extended as $0$ when $t<t_0$ it becomes a viscosity supersolution  in the whole $\Omega \times \R$. 

\begin{prop}
\label{blowupmedium}
Suppose that we have an increasing sequence
$0 \leq h_1\leq h_2\leq h_3\leq\dots$ of viscosity supersolutions in $\Omega_T$ and denote $h = \lim_{k\to\infty}h_k$.
If there is a sequence $(x_k,t_k) \to (x_0,t_0)$ such that $h_{k}(x_k,t_k) \to \infty$, where $ x_0\in\Omega$ and $0<t_0<T$,
then
$$\liminf_{\substack{(y,t)\to(x,t_0)\\t>t_0}}h(y,t)(t-t_0)^{\frac{1}{m-1}}>0
\quad\text{for all}\quad x\in \Omega.$$
Thus, at time $t_0$,
$$\lim_{\substack{(y,t)\to(x,t_0)\\t>t_0}}h(y,t)\equiv\infty\quad\text{in}\quad \Omega.$$
\end{prop}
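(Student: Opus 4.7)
The plan is to mirror the proof of Proposition \ref{blowup} almost verbatim, making the substitutions $p-2 \mapsto m-1$, $R^p \mapsto R^2$, and replacing the separable $\mathfrak{U}^R$ by the Friendly Giant $\mathfrak{G}^R$ on $B(x_0, R)$; the intrinsic Harnack inequality \eqref{Harnackmedium} plays the role of \eqref{Harnack}. First I would fix a ball $B(x_0, 4R) \Subset \Omega$ and set
$$\theta_k = \frac{CR^2}{h_k(x_k, t_k)^{m-1}}.$$
Since $h_k(x_k, t_k) \to \infty$, $\theta_k \to 0$, and for all sufficiently large $k$ the cylinder $B(x_k, 4R) \times (t_k - 4\theta_k, t_k + 4\theta_k)$ is compactly contained in $\Omega_T$. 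Applying \eqref{Harnackmedium} to $h_k$ then gives
$$h_k(x_k, t_k) \leq \gamma\, h_k(x, t_k + \theta_k) \quad\text{for}\quad x \in B(x_k, R),$$
and since $x_k \to x_0$ this estimate holds on a slightly shrunken copy of $B(x_0, R)$ for all large $k$.

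Next I would compare $h_k$ with the shifted Friendly Giant
$$G_k(x, t) = \frac{\mathfrak{G}^R(x)}{\bigl(t - t_k + (\Lambda - 1)\theta_k\bigr)^{\frac{1}{m-1}}}$$
on $B(x_0, R) \times [t_k + \theta_k, T)$. At $t = t_k + \theta_k$ the denominator equals $(\Lambda CR^2)^{\frac{1}{m-1}}/h_k(x_k, t_k)$, so combined with the Harnack estimate above,
$$G_k(x, t_k + \theta_k) = \frac{\mathfrak{G}^R(x)\, h_k(x_k, t_k)}{(\Lambda CR^2)^{\frac{1}{m-1}}} \leq \frac{\gamma\, \|\mathfrak{G}^R\|_{L^{\infty}(B(x_0, R))}}{(\Lambda CR^2)^{\frac{1}{m-1}}}\, h_k(x, t_k + \theta_k).$$
Fixing $\Lambda$ so large that the leading constant is $\leq 1$, and noting that $G_k$ vanishes on $\partial B(x_0, R)$ and for $t \leq t_0$, the comparison principle for the Porous Medium Equation yields $G_k \leq h_k \leq h$ on $B(x_0, R) \times [t_k + \theta_k, T)$.

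Letting $k \to \infty$ (so that $t_k \to t_0$ and $\theta_k \to 0$), I obtain
$$h(x, t) \geq \frac{\mathfrak{G}^R(x)}{(t - t_0)^{\frac{1}{m-1}}} \quad\text{for}\quad t_0 < t < T,\ x \in B(x_0, R).$$
Since $\mathfrak{G}^R > 0$ in $B(x_0, R)$, this delivers the claimed \emph{liminf} at every interior point of this ball. A chain-of-balls argument, analogous to the one at the end of Proposition \ref{blowup}, then propagates the blow-up to all of $\Omega$. The main obstacle I anticipate is the technical verification that Harnack's inequality \eqref{Harnackmedium} (stated for continuous positive weak solutions) applies to the members $h_k$, and that the Porous Medium comparison principle applies to the pair $(G_k, h_k)$ on the intermediate cylinder. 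Both are handled exactly as in the $p$-Laplace case, possibly after approximating each $h_k$ from below by continuous solutions obtained from suitable Dirichlet problems, where the Sobolev structure on $h_k^m$ makes the weak comparison principle directly available.
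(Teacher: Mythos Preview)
Your proposal is correct and mirrors exactly what the paper intends: the paper does not give an independent proof of Proposition~\ref{blowupmedium} but treats it as the obvious transcription of the proof of Proposition~\ref{blowup}, with the substitutions $p-2\mapsto m-1$, $R^p\mapsto R^2$, and $\mathfrak{U}^R\mapsto\mathfrak{G}^R$, using the intrinsic Harnack inequality~\eqref{Harnackmedium} in place of~\eqref{Harnack}. Your write-up carries this out correctly, including the choice of $\Lambda$, the comparison step, the passage $k\to\infty$, and the chain-of-balls propagation.

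The one point you flag --- that~\eqref{Harnackmedium} is stated for continuous weak \emph{solutions} while the hypothesis here says ``viscosity supersolutions'' --- is a genuine discrepancy in the statement, not in your argument. In the paper's actual application (the analogue of Section~5), the functions $h^\varepsilon$ to which the proposition is applied are solutions of a Dirichlet problem in the outer region, just as in the $p$-Laplace case, so Harnack applies directly there. If one insists on proving the proposition as literally stated for supersolutions, your suggested fix (replace each $h_k$ on a subcylinder by the solution with the same parabolic boundary data, which lies below $h_k$ by comparison and can be arranged to inherit the blow-up along a sequence) is the standard device and works.
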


\begin{remark} Notice that the limit function is not a solution in the whole domain,
but it may, nonetheless, be finite at each point. (This is different from the Heat Equation, see \cite{W}.)
\end{remark}

If it so happens that the subsequence in the Proposition does not exist, then we have the normal situation with a solution:

\begin{prop}
\label{increasingmedium}
Suppose that we have an increasing sequence
$0 \leq h_1\leq h_2\leq h_3\leq\dots$ of viscosity supersolutions in $\Omega_T$ and denote $h = \lim_{k\to\infty}h_k$.
If the sequence $\{h_k\}$ is locally bounded, then
the limit function $h$ is a supersolution in $\Omega_T$.
\end{prop}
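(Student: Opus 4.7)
The plan is to adapt the strategy of Proposition \ref{increasing} to the Porous Medium Equation, exploiting the fact that in the weak formulation $\nabla h^m$ enters linearly (as opposed to $|\nabla h|^{p-2}\nabla h$ for the evolutionary $p$-Laplacian), so that mere \emph{weak} compactness of gradients will suffice. First, I would dispose of the two easy items in Definition \ref{supercaloric}: since $h=\sup_k h_k$ is the pointwise supremum of an increasing sequence of lower semicontinuous functions it is itself lower semicontinuous, and the hypothesis of local boundedness makes $h$ finite everywhere, hence on a dense subset.

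Next I would upgrade each $h_k$ to a weak supersolution using Theorem 3.2 of \cite{KL3}, which places $h_k^m \in L^2_{loc}(0,T;W^{1,2}_{loc}(\Omega))$ and gives
$$\int_0^T\!\int_\Omega\bigl(-h_k\varphi_t+\langle\nabla(h_k^m),\nabla\varphi\rangle\bigr)\dif x\dif t\geq 0$$
for every non-negative $\varphi\in C^\infty_0(\Omega_T)$. A standard Caccioppoli argument for the PME, testing with $\zeta^2 h_k^m$ for a spatial cutoff $\zeta$, would then bound $\int\zeta^2|\nabla(h_k^m)|^2$ by $\int h_k^{2m}|\nabla\zeta|^2$ plus a time-slice term involving $\zeta^2 h_k^{m+1}$. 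Since $\{h_k\}$ is locally uniformly bounded by assumption, this produces an $L^2_{loc}(\Omega_T)$ bound on $\{\nabla(h_k^m)\}$ independent of $k$.

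With the uniform energy estimate in hand, I would pass to the limit. Monotone convergence together with local boundedness yields $h_k\to h$ and $h_k^m\to h^m$ in $L^q_{loc}(\Omega_T)$ for every finite $q$; weak $L^2_{loc}$ compactness then extracts a subsequence with $\nabla(h_k^m)\rightharpoonup \xi$, and the $L^1_{loc}$ convergence $h_k^m\to h^m$ identifies $\xi$ with the distributional gradient $\nabla(h^m)$, so that in particular $h^m\in L^2_{loc}(0,T;W^{1,2}_{loc}(\Omega))$. Since both terms in the weak-supersolution integral are linear in $h_k$ respectively $\nabla(h_k^m)$, passage to the limit is immediate and delivers the analogous inequality for $h$; combined with the lower semicontinuity established in the first step, this places $h$ in the required class of supersolutions.

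The step I expect to be most delicate is not the weak-limit procedure itself but justifying that the weak $L^2_{loc}$ limit of $\{\nabla(h_k^m)\}$ really is the Sobolev gradient of $h^m$. This is carried out \emph{a posteriori}: integrate by parts $\nabla(h_k^m)$ against an arbitrary compactly supported smooth vector field and use the strong $L^1_{loc}$ convergence $h_k^m\to h^m$ to transfer the derivative to the vector field, obtaining the distributional identity that characterizes $\nabla(h^m)$. In contrast to the $p$-Laplacian situation of Proposition \ref{increasing}, no nonlinear compactness of gradients (as provided by the Cauchy-in-$L^{p-1}_{loc}$ result cited there from \cite{LM}) is required here, precisely because the PME is quasilinear only through the power $u^m$ and not through $\nabla u$.
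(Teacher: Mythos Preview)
The paper itself supplies no proof of this proposition; it is merely stated, and the reader is implicitly invited to adapt the argument for Proposition~\ref{increasing}. Your approach is sound and in fact more robust than a direct transcription of that earlier proof would be, for two reasons. First, Proposition~\ref{increasing} treats $p$-caloric \emph{solutions}, whose interior H\"older continuity drives the equicontinuity $\Rightarrow$ uniform convergence $\Rightarrow$ strong gradient convergence chain; here the $h_k$ are only (viscosity, hence weak) \emph{super}solutions, so that route is not available without further work. Second, as you correctly observe, the weak formulation for the Porous Medium Equation is linear in $\nabla(h_k^m)$, so weak $L^2_{loc}$ compactness of the gradients already suffices to pass to the limit, and no analogue of the $L^{p-1}_{loc}$-Cauchy result invoked in the proof of Proposition~\ref{increasing} is needed.

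There is, however, one genuine slip in your Caccioppoli step. Testing the supersolution inequality with $\varphi=\zeta^2 h_k^m$ produces the energy term $\int\zeta^2|\nabla h_k^m|^2$ with the \emph{wrong sign}, so no upper bound results; that test function is the one appropriate for \emph{sub}solutions. For a supersolution bounded locally by $M$ on the support of $\zeta$, the correct choice is $\varphi=\zeta^2(M^m-h_k^m)\geq 0$. With this test function the computation you outline goes through and yields
\[
\int_{t_1}^{t_2}\!\int_\Omega \zeta^2\,|\nabla h_k^m|^2\,\dif x\,\dif t
\;\leq\; C\Bigl(M^{2m}\!\int_{t_1}^{t_2}\!\int_\Omega|\nabla\zeta|^2\,\dif x\,\dif t
\;+\; M^{m+1}\!\int_\Omega\zeta^2\,\dif x\Bigr),
\]
which is uniform in $k$ by the local boundedness hypothesis. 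The remainder of your argument---monotone convergence of $h_k$ and $h_k^m$, weak $L^2_{loc}$ compactness of $\nabla(h_k^m)$, identification of the weak limit with the distributional gradient $\nabla(h^m)$ via integration by parts against a smooth vector field, and passage to the limit in the (linear) supersolution inequality---is correct as written.
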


After this, the proof proceeds along the same lines as for the $p$-parabolic equation. A difference is that the infimal convolution should be replaced by the solution to an obstacle problem as in Chapter 5 of \cite{KL3}. 


\begin{thebibliography}{A}
\bibitem{B} \textsc{G. Barenblatt}: {\it On self-similar motions of a compressible fluid in a porous medium}, Prikladnaja Matematika \& Mekhanika \textbf{16} 1952, pp. 679--698. ---In Russian.
\bibitem{BDG} \textsc{L. Boccardo, A. Dall'Aglio, T. Gallou\"{e}t, L. Orsina}: {\it Nonlinear parabolic equations with measure data}, Journal of Functional Analysis \textbf{147}, 1997, pp. 237--258.
\bibitem{D} \textsc{E. Dibenedetto}: {\it Intrinsic Harnack type inequalities for solutions of certain degenerate parabolic equations}, Archive for Rational Mechanics and Analysis \textbf{100}, 1988, pp. 129--147.
\bibitem{Db} \textsc{E. Dibenedetto}: Degenerate Parabolic Equations, Springer Verlag, Berlin-Heidelberg-New York 1993.
\bibitem{DG1} \textsc{E. DiBenedetto, U. Gianazza, V. Vespri}: {\it Harnack estimates for quasi-linear degenerate parabolic differential equations}, Acta Mathematica \textbf{200}, 208, pp. 181--209.
\bibitem{DG2} \textsc{E. DiBenedetto, U. Gianazza, V. Vespri}: Harnack's Inequality for Degenerate and Singular Parabolic Equations, Springer, Berlin-Heidelberg-New York 2012.
\bibitem{JLM} \textsc{P. Juutinen, P. Lindqvist, J. Manfredi}: {\it  On the equivalence of viscosity solutions and weak solutions for a quasi-linear equation}, SIAM Journal on Mathematical Analysis \textbf{33}, 2001, pp. 699--717.
\bibitem{KL} \textsc{T. Kilpel\"{a}inen, P. Lindqvist}: {\it On the Dirichlet boundary value problem for a degenerate parabolic equation}, SIAM Journal on Mathematical Analysis \textbf{27}, 1996, pp. 661--683.
\bibitem{KL1} \textsc{J. Kinnunen, P. Lindqvist}: {\it Pointwise behaviour of semicontinuous supersolutions to a quasilinear parabolic equation}, Annali di Matematica Pura ed Applicata (4) \textbf{185}, 2006, pp. 411--435.
\bibitem{KL2} \textsc{J. Kinnunen, P. Lindqvist}: {\it Summability of semicontinuous supersolutions to a quasilinear parabolic equation}, Annali della Scuola Normale Superiore di Pisa, Classe di Scienze (Serie V) \textbf{4}, 2005, pp.59--78.
\bibitem{KL3} \textsc{J. Kinnunen, P. Lindqvist}: {\it Definition and properties of supersolutions to the porous medium equation}, Journal f\"{u}r die reine und angewandte Mathematik \textbf{618}, 2008, pp. 135--168.
\bibitem{KLP} \textsc{J. Kinnunen, T. Lukkari, M. Parviainen}: {\it An existence result for superparabolic functions}, Journal of Functional Analysis \textbf{258}, 2010, pp. 713--728.
\bibitem{K} \textsc{T. Kuusi}: {\it Lower semicontinuity of weak supersolutions to nonlinear parabolic equations}, Differential and Integral Equations \textbf{22}, 2009, pp. 1211--1222. 
\bibitem{KuL} \textsc{T. Kuusi, P. Lindqvist, M. Parviainen}: {\it Shadows of Infinities}, Manuscript 2014, arXiv:1406:6309.
\bibitem{KuMi1} \textsc{T. Kuusi, G. Mingione}: {\it Riesz potentials and nonlinear parabolic equations}, Archive for Rational Mechanics Analysis \textbf{212}, 2014, pp. 727--780.
\bibitem{KuMi2} \textsc{T. Kuusi, G. Mingione}: {\it The Wolff gradient bound for degenerate parabolic equations}, Journal of European Mathematical Society \textbf{16(4)}, 2014, pp. 835--892.
\bibitem{LM1} \textsc{P. Lindqvist, J. Manfredi}: {\it Viscosity supersolutions of the evolutionary $p$-Laplace equation}, Differential and Integral Equations \textbf{20}, 2007, pp. 1303--1319.
\bibitem{LM2} \textsc{P. Lindqvist, J. Manfredi}: {\it Note on a remarkable superposition for a quasilinear  equation}, Proceedings of the American Mathematical Society \textbf{136}, 2008, pp. 136--140.
\bibitem{SGK} \textsc{A. Samarskii, V. Galaktionov, S. Kurdyumov, A. Mikhailov:} Blow-up in Quasilinear Parabolic Equations, Walter de Gruyter \& Co., Berlin 1995. 
\bibitem{T1} \textsc{N. Trudinger}: {\it On Harnack type inequalities and their application to quasilinear elliptic equations},  Communications on Pure and Applied Mathematics \textbf{20}, 1967, pp. 721--747.
\bibitem{T2}  \textsc{N. Trudinger}: {\it Pointwise estimates and quasilinear parabolic equations}, Communications on Pure and Applied Mathematics \textbf{21}, 1968, pp.205--226.
\bibitem{V} \textsc{J. V\'{a}zquez}: The Porous Medium Equation Mathematical Theory, Oxford Mathematical Monographs, Clarendon Press 2007.
\bibitem{W} \textsc{N. Watson}: Introduction to Heat Potential Theory, Mathematical Surveys and Monographs \textbf{182}, American Mathematical Society, Providence RI 2012.
\bibitem{WZY} \textsc{Z. Wu, J. Zhao, J. Yin, H. Li}: Nonlinear Diffusion Equations, World Scientific, Singapore 2001.  
\end{thebibliography}
\end{document}